\documentclass[12pt]{article}
\usepackage{amsmath,amssymb,epsfig,setspace,color,graphicx,graphics,amsthm,cancel}
\usepackage[left=2.4cm,top=2.4cm,right=2.4cm,bottom=2.5cm]{geometry}
\newtheorem{theorem}{Theorem}[section]
\newtheorem{lemma}[theorem]{Lemma}
\newtheorem{observation}[theorem]{Observation}
\newtheorem{question}[theorem]{Question}
\newtheorem{corollary}[theorem]{Corollary}
\newtheorem{proposition}[theorem]{Proposition}

\newtheorem{defn}[theorem]{Definition}

\DeclareMathOperator{\len}{len}
\DeclareMathOperator{\rd}{rd}

\newcommand{\x}{ \, \square  \,}

\title{The $2$-burning number of a graph}
\author{C.B. Jacobs\thanks{Wellesley College, MA, USA}, M.E. Messinger\thanks{Mount Allison University, NB, Canada}, A.N. Trenk\thanks{Wellesley College, MA, USA, Corresponding author, \tt{atrenk@wellesley.edu}}}

\date{\today}
\begin{document}
\maketitle

\begin{abstract}
  We study a discrete-time model for the spread of information in a graph, motivated by the idea that people believe a story when they learn of it from two different origins.    Similar to the burning number,  in this problem, information spreads in rounds and a new source can appear in each round.  For a graph $G$, we are interested in $b_2(G)$, the   minimum  number of rounds until the information has spread  to all vertices of graph $G$.   We are also interested in finding $t_2(G)$,  the minimum  number of sources necessary  so that the information spreads to all vertices of $G$ in $b_2(G)$ rounds.   In addition to general results, we find $b_2(G)$ and $t_2(G)$ for the classes of spiders and wheels and show that their behavior differs with respect to these two parameters.  We also provide examples and prove upper bounds
  for these parameters for Cartesian products of graphs.
\end{abstract}

\noindent
{\bf Keywords:} burning number,  graph burning, discrete-time processes, firefighter problem

\section{Introduction}
\label{intro-sec}

The concept of graph burning, introduced by Bonato et al.~\cite{BJR2014} is a deterministic, discrete-time model for the spread of social contagion on a graph.  A social network can be modeled by a graph in which the vertices represent people and the edges represent relationships.    For example, on a graph whose vertices correspond to Facebook users, edges may represent users who are ``Facebook friends".  Information, such as gossip, rumors, or memes, can spread from vertex to vertex over time along edges of the graph and we model the process using discrete time-steps called \emph{rounds}.  Such information may not stem from a single source vertex; there may be a number of sources that appear over time in the graph.    Authors often refer to vertices as ``unburned" (unaware of the information) and ``burned" (aware of the information) since rumors can appear to spread  swiftly like fire.  We will instead use colors to depict these states:  uncolored for unburned and blue for burned.  
 The discrete-time spread of information also mimics the spread of fire in the Firefighter Problem~\cite{FirefighterSurvey}, although the latter involves agents try to block the spread of fire on a graph.

In 2016, Bonato et al.~\cite{BJR2016} suggested a generalized burning process; and such a  study   was formally initialized in 2021 by Li et al.~\cite{LQL2021}.  For a finite graph $G$, there are two possible states for a vertex: uncolored or blue. 
Initially, at round $0$, all vertices are uncolored.  At round $j$ for $j \ge 1$, one or both of the following  occur:  (i) an uncolored vertex is selected as a source and colored blue, and (ii) every uncolored vertex that had at least $r$ blue neighbors at round $j-1$ is colored blue. 

If $r=1$, the process describes the original burning model of~\cite{BJR2016}, which has been studied extensively; see the survey~\cite{Bsurvey}.  One of the main questions surrounding the original burning model is how quickly the influence or contagion can propagate through the graph.
For a finite graph $G$, Li et al.~\cite{LQL2021} denote by $b_r(G)$, the minimum number of rounds after which every vertex is colored blue and since $G$ is finite, the parameter is well-defined.  Li et al.~\cite{LQL2021} named the parameter, the {\it generalized burning number}, but because this label does not reference $r$ and $b_r(G)$ can change for different values of $r$ on a fixed graph $G$, we refer to   the parameter  as the {\it $r$-burning number} of graph $G$.  For  example, for the graph $H$   in Figure~\ref{fig:firstexample} it is straightforward to show $b_2(H)=3$   and this can be achieved  by selecting $a,b,c$ as sources in rounds $1,2,3$ respectively.  If these same sources are selected in the order $a,c,b$, it requires four rounds for all vertices to turn blue.

\begin{figure}[htbp]
\[ \includegraphics[width=0.125\textwidth]{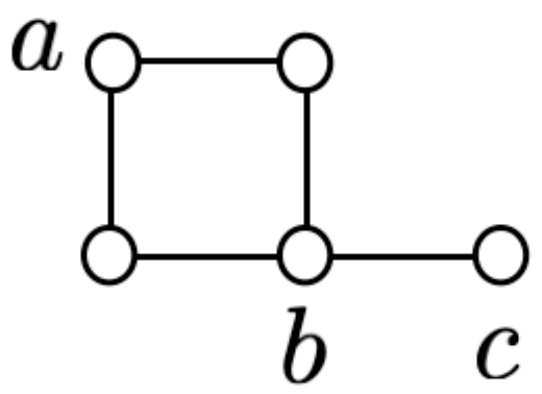}\]

\label{fig:firstexample} 

\caption{A graph $H$ with $b_2(H) = 3$. }
\end{figure}

The generalized burning process is related to  $r$-neighbor bootstrap percolation, which is another way to model the spread of infection or information among vertices in a graph.  In both processes, we color infected vertices blue, but in a generalized burning process there is at most one source vertex per round, whereas in bootstrap percolation, the source vertices are all selected at the start.  More precisely,  for a graph $G$ and positive integer $r$, the process of $r$-neighbor bootstrap percolation is the following: during round $0$, a set of vertices $A \subseteq V(G)$ turns blue; and during round $t>0$, every vertex that has at least $r$ blue neighbors at round $t-1$, becomes blue.  For a set $A$ in graph $G$, {\it percolation} of $G$ occurs if every vertex of $G$ is eventually blue and in this case, $A$ is called a {\it percolating set}.  
 Consequently, for a fixed positive integer $r$,  the cardinality of a minimum  size percolating set on graph $G$ provides a lower bound for the $r$-burning number.  We discuss the relationship of the $r$-burning number to $r$-neighbor bootstrap percolation further in Section~\ref{sec:questions}.  For more background on $r$-neighbor bootstrap percolation, see \cite{BBM,MN,PS}.  
 \medskip

In this paper, we focus on $b_2(G)$, the $2$-burning number of graph $G$, motivated by the idea that people often  believe a rumor when they hear it from two different people.  Li et al.~\cite{LQL2021} determined the $2$-burning number for some families of graphs including paths, cycles,  and complete bipartite graphs.  They also provided some preliminary bounds on $b_2(G)$ for a general graph $G$.   We also study $t_2(G)$, the $2$-burning source number of a graph $G$, which we define as the minimum number of sources so that all vertices of $G$ are blue after $b_2(G)$ rounds.  The parameter $t_2(G)$  provides a middle ground between the $2$-burning number of $G$ and the minimum size percolating set  for $2$-neighbor bootstrap percolation on $G$.

 The rest of the paper is organized as follows.  In Section 2 we provide the foundational definitions as well as results about subgraphs, coronas and joins.  Section 3 focuses on two families of graphs: spiders and wheels.  We find exact values for $b_2(G)$ and $t_2(G)$ when $G$ is a spider or wheel and show that these parameters are within $1$ for spiders but can be arbitrarily far apart for wheels.  We turn to Cartesian products in Section 4 and provide examples and  prove upper bounds for both the $2$-burning number and the $2$-burning source number.  In our concluding section, we discuss connections between $b_2(G)$, $t_2(G)$, and the minimum size of an associated percolating set.  We also pose a few open questions.  

\section{Definitions and preliminary results}

We assume all graphs to be finite and connected.
\subsection{Definitions}

The following algorithm gives a precise description of the $2$-burning process on a graph with  a specified sequence of sources.  Note that the algorithm will always terminate since our graphs are finite.
 
 \smallskip
 
\noindent
{\bf Algorithm $2$-burning }
\smallskip

\noindent
{\bf Input:}  A graph $G$ and a sequence $(s_1, s_2, \ldots, s_m)$ of vertices of $G$ (called \emph{sources}).

\smallskip

\noindent
{\bf Output:}  Either returns the number of rounds until all vertices are blue, or reports that there will always be an uncolored vertex.
\smallskip

\noindent
{\bf Initialize:}  At round $0$, all vertices are uncolored.

\smallskip

\noindent
{\bf Loop:}  Iterate the following starting at $j=1$ and continue  until either all vertices are blue or no new vertices become blue.

At round $j$: 

 (i) If $j \le m$ and $s_j$ is uncolored, then color the source vertex $s_j$  blue.

(ii) For every uncolored vertex $v$ with at least two blue neighbors at round $j-1$, color $v$  blue.

If all vertices are blue, terminate and report $ j$, which is  the number of rounds until all vertices become blue.

If  $j > m$ and no vertices turned blue in round $j$, terminate and report that there will always be an uncolored vertex for this source sequence.
 
 Otherwise, increment $j$ and begin the loop again.
 \smallskip
 
 \noindent
 {\bf (End of Algorithm)}

\bigskip

With this formal description of the $2$-burning process, we can now introduce our fundamental definitions.

\begin{defn}{\rm
A  \emph{$2$-burning sequence} $s$ for a graph $G$ is a sequence $(s_1,s_2, s_3, \ldots, s_m)$ of vertices of $G$ for which all vertices are blue when Algorithm $2$-burning terminates. 
}
\end{defn}

\begin{defn}
{\rm If  $s = (s_1,s_2, s_3, \ldots, s_m)$ is a $2$-burning sequence for graph $G$ then  the length of $s$, denoted by $\len(s)$, is $m$ and  
$\rd(s)$ is the output of Algorithm $2$-burning, that is, the number of rounds until all vertices of $G$ are blue. 
}
\end{defn}

 Returning to the graph in Figure~\ref{fig:firstexample}, if $s=(a,b,c)$ then $\len(s)=3$ and $\rd(s)=3$; however, if $s'=(a,c,b)$ then $\len(s')=3$ and $\rd(s')=4$.
 
\begin{defn}
{\rm The  \emph{$2$-burning number} for graph $G$, denoted by $b_2(G)$, is the minimum value of  $\rd(s)$, taken over all $2$-burning sequences for $G$.   A $2$-burning sequence that achieves this minimum is  called \emph{optimal}.  The \emph{$2$-burning source number} for graph $G$ is the minimum length of an optimal $2$-burning sequence for $G$ and is denoted by $t_2(G)$.
 } 
 \end{defn}
 
 For example,  selecting any two distinct source vertices provides an optimal $2$-burning sequence of minimum length for the complete graph $K_n$, but when $n \ge 3$,  it takes one additional round for all vertices to turn blue.  Thus  $b_2(K_n) = 3$  and $t_2(K_n) = 2$ for $n \ge 3$.
By definition, $t_2(G) \le b_2(G)$ for all graphs $G$.

\subsection{Subgraphs and vertices of degree one and two}

For  many real-valued functions defined on graphs  (e.g., chromatic number, maximum degree,  girth) the function value decreases for induced subgraphs.  However, the same is not true for the $2$-burning number.  For example, the path $P_{7}$ is an induced subgraph of the path $P_{12}$ and also of the wheel $W_8$.    We will see in Theorems~\ref{path-cycle-thm}  and \ref{wheel-thm}   that $b_2(P_{7}) = 5$, $b_2(P_{12}) = 7$ and $b_2(W_8) = 4$,  and indeed these theorems show that the gap between the $2$-burning number of a graph and that of an induced subgraph can be made arbitrarily large in either direction using paths and wheels.   However, for \emph{spanning} subgraphs, we can prove an inequality.  

\begin{lemma} \label{sameverticessublem}
    If $G$ and $H$ are connected graphs and $H$ is a subgraph of $G$ with $|V(G)|=|V(H)|$, then $b_{2}(G) \leq b_{2}(H)$ and $t_{2}(G) \leq t_{2}(H)$. 
\end{lemma}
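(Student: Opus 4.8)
The plan is to get both inequalities from one monotonicity fact: since $E(H)\subseteq E(G)$, running the \emph{same} source sequence on $G$ burns at least as fast as on $H$.

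The first step is to show that for any vertex sequence $s=(s_1,\dots,s_m)$, if $B_j^{H}$ and $B_j^{G}$ denote the sets of blue vertices after round $j$ when Algorithm $2$-burning is run with input $s$ on $H$ and on $G$ respectively, then $B_j^{H}\subseteq B_j^{G}$ for all $j\ge 0$. This is an induction on $j$: step (i) adds the source $s_j$ in both processes, and for step (ii), any uncolored $v$ with at least two neighbours in $B_{j-1}^{H}$ also has (using $E(H)\subseteq E(G)$ together with the inductive hypothesis $B_{j-1}^{H}\subseteq B_{j-1}^{G}$) at least two neighbours in $B_{j-1}^{G}$, hence turns blue in $G$ as well; and once blue, a vertex stays blue in both processes. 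Consequently every $2$-burning sequence $s$ for $H$ is a $2$-burning sequence for $G$ with $\rd_G(s)\le\rd_H(s)$.

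Applying this to an optimal $2$-burning sequence for $H$ gives $b_2(G)\le b_2(H)$ at once. For the source number, let $s$ be an optimal $2$-burning sequence for $H$ of minimum length, so $\len(s)=t_2(H)$ and $\rd_H(s)=b_2(H)$; then $s$ is a $2$-burning sequence for $G$ of the same length with $b_2(G)\le\rd_G(s)\le b_2(H)$. If $b_2(G)=b_2(H)$ this forces $\rd_G(s)=b_2(G)$, so $s$ is an optimal $2$-burning sequence for $G$ of length $t_2(H)$, and $t_2(G)\le t_2(H)$.

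The step I expect to be the main obstacle is the remaining case $b_2(G)<b_2(H)$: here the transported sequence $s$ need not be optimal for $G$ (this can happen), and the cheap bound $t_2(G)\le b_2(G)\le b_2(H)-1$ need not be at most $t_2(H)$, so neither observation closes the gap. I would attack this by induction on $|E(G)\setminus E(H)|$ — an edge of $G$ not in $H$ is not a bridge of $G$, because $H$ is a connected spanning subgraph, so it suffices to treat $G=H+xy$ — and then build an optimal $2$-burning sequence for $G$ of length at most $t_2(H)$ from one for $H$, using the structural fact that any optimal $2$-burning sequence for $G=H+xy$ must actually use the edge $xy$ (otherwise its run on $G$ coincides with its run on $H$, forcing $b_2(H)\le b_2(G)$). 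Converting the head start that the blue sets in $G$ gain once $xy$ first fires into a saving of sources, and doing the resulting bookkeeping carefully, is the delicate part of the argument.
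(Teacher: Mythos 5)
Your blue-set inclusion $B_j^{H}\subseteq B_j^{G}$ and the deduction $b_2(G)\le b_2(H)$ are exactly the paper's argument, which states in one sentence that any $2$-burning sequence for $H$ is also one for $G$ and needs no more rounds there. Your treatment of $t_2$ in the case $b_2(G)=b_2(H)$ is also correct. The paper, however, deduces $t_2(G)\le t_2(H)$ from the same transport argument with no case distinction, which silently assumes that a minimum-length \emph{optimal} sequence for $H$ stays optimal for $G$; the subtlety you flagged is precisely the point the paper's proof glosses over.

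The remaining case $b_2(G)<b_2(H)$ is a genuine gap, and the repair you sketch (induction on $|E(G)\setminus E(H)|$, trading the head start from the new edge for a saving of sources) cannot be carried out, because the inequality $t_2(G)\le t_2(H)$ is false in general, already for $G=H+xy$. Let $H$ be the bowtie: triangles $abc$ and $cde$ sharing the vertex $c$. Every pair of vertices of $H$ has at most one common neighbour, so after round $3$ at most four of the five vertices can be blue, giving $b_2(H)\ge 4$; the sequence $(b,d)$ burns $H$ in exactly $4$ rounds (round $3$ colours $c$, round $4$ colours $a$ and $e$), so $b_2(H)=4$ and $t_2(H)=2$. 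Now let $G=H+ad$. The sequence $(a,c,e)$ burns $G$ in $3$ rounds, since at round $3$ the vertex $e$ is a source while $b$ and $d$ each have the two blue neighbours $a$ and $c$; hence $b_2(G)=3$. A $3$-round sequence with only two sources would force the other three vertices all to be common neighbours of the first two sources, and no pair of vertices of $G$ has three common neighbours, so $t_2(G)=3>2=t_2(H)$. Thus the ``delicate part'' of your plan has no successful resolution: what your argument actually proves is $b_2(G)\le b_2(H)$ in all cases, together with $t_2(G)\le t_2(H)$ under the extra hypothesis $b_2(G)=b_2(H)$, and that is the most the transport argument---the paper's proof included---can yield. (The only later use of the $t_2$ half of this lemma, in Corollary~\ref{cor:low}, needs nothing beyond the trivial bound $t_2\ge 2$, so the rest of the paper is unaffected.)
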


\begin{proof}
Since the edges of $H$ are present in $G$, any
   $2$-burning sequence for $H$  will also be a  $2$-burning sequence for $G$.    Moreover, using any $2$-burning sequence $s$ for $H$, the number of rounds until all vertices are blue in $H$ is at least as large as when $s$ is used as a $2$-burning sequence for $G$.  Thus $b_{2}(G) \leq b_{2}(H)$ and $t_{2}(G) \leq t_{2}(H)$.
   \end{proof}

We next consider the role of leaves (i.e., vertices of  degree one) and vertices of degree two when creating a $2$-burning sequence.  Observe that there are only two ways for any vertex to turn blue, either it is a source or it has two blue neighbors. Since leaves have only one neighbor, they must be sources, which we record as an observation below.  
In Lemma~\ref{adjdeg2lem} we show that for  adjacent vertices of degree two, at least one must be a source.

\begin{observation}\label{obs:deg1}
If $v$ is a leaf of graph $G$ and $s$ is a $2$-burning sequence for $G$ then $v \in s$.  Consequently, if graph $G$ has $k$ leaves, then $b_2(G) \ge k$.
\end{observation}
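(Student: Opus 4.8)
The plan is to argue directly from the fact, noted just before the observation, that a vertex turns blue in exactly two ways: it is chosen as a source, or it has at least two blue neighbors at the end of the previous round. First I would handle the statement about a single leaf. Let $v$ be a leaf of $G$ and let $s$ be a $2$-burning sequence. Since $v$ has exactly one neighbor, step (ii) of Algorithm $2$-burning can never apply to $v$: $v$ can never have two blue neighbors at any round. Hence the only way $v$ can ever become blue is through step (i), i.e., by being selected as a source. Because $s$ is a $2$-burning sequence, every vertex — in particular $v$ — is blue when the algorithm terminates, so $v$ must have been colored at some round, and that round must have been one in which $v$ was the source. Therefore $v$ appears in $s$.

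Next I would deduce the bound on $b_2(G)$. Suppose $G$ has $k$ leaves $v_1, \dots, v_k$, and let $s = (s_1, \dots, s_m)$ be an \emph{optimal} $2$-burning sequence, so that $\rd(s) = b_2(G)$. Each $v_i$ is blue at the terminating round $\rd(s)$, and by the first part it became blue only by being processed as a source in step (i) of some round; a source with index $j$ is processed in round $j$, so for each $i$ there is an index $j_i \le \rd(s)$ with $s_{j_i} = v_i$. Since $v_1, \dots, v_k$ are pairwise distinct vertices and each $s_{j}$ is a single vertex, the indices $j_1, \dots, j_k$ are pairwise distinct and all lie in $\{1, \dots, \rd(s)\}$. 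Hence $k \le \rd(s) = b_2(G)$, as claimed.

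I do not anticipate a genuine obstacle, since both parts are short; the only point worth phrasing with care is that a source $s_j$ is colored in round $j$ only if it is still uncolored, so the argument should be stated in terms of ``$v_i$ became blue at some round $\le \rd(s)$, and the only mechanism available to a leaf is step (i)'' rather than committing to a particular occurrence of $v_i$ in the sequence. Once that is said, the distinctness of the leaves immediately forces $\rd(s) \ge k$ for every (in particular every optimal) $2$-burning sequence, which gives the stated inequality.
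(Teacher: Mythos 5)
Your proof is correct and follows essentially the same reasoning the paper gives (in the sentence immediately preceding the observation): a leaf can never acquire two blue neighbors, so it must be colored as a source, and since at most one source is processed per round, $k$ distinct leaves force at least $k$ rounds. Your added care about a source only being colored in the round where it is actually processed while uncolored is a reasonable refinement, but it does not change the argument.
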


\begin{lemma} \label{adjdeg2lem}
    If $G$ is a graph and $v_{1}$ and $v_{2}$ are adjacent vertices in $G$ with $\deg(v_{1})=\deg(v_{2})=2$, then at least one of $v_{1}$ and $v_{2}$ must be a source in any $2$-burning sequence. 
\end{lemma}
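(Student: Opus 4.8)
The plan is to argue by contradiction. Suppose $s=(s_1,\ldots,s_m)$ is a $2$-burning sequence for $G$ in which neither $v_1$ nor $v_2$ is a source. Because $s$ is a $2$-burning sequence, every vertex of $G$ eventually turns blue; in particular $v_1$ and $v_2$ each become blue at some round. Let $j_i\ge 1$ be the round at which $v_i$ first turns blue, for $i\in\{1,2\}$, and relabel if necessary so that $j_1\le j_2$.

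Next I would invoke the dichotomy noted just before Observation~\ref{obs:deg1}: a vertex can turn blue only by being a source or by having at least two blue neighbors in the previous round. Since $v_1$ is not a source, it must become blue via rule (ii) of Algorithm $2$-burning, so at round $j_1-1$ the vertex $v_1$ has at least two blue neighbors. But $\deg(v_1)=2$ and one of its two neighbors is $v_2$; hence $v_2$ is already blue at round $j_1-1$. This forces $j_2\le j_1-1<j_1$, contradicting the choice $j_1\le j_2$. Therefore at least one of $v_1,v_2$ must be a source in $s$.

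I expect no serious obstacle here; the only points to double-check are that the argument is insensitive to the rest of the graph — for instance the degenerate case where $v_1$ and $v_2$ share their second neighbor (so they lie on a triangle), or where $G$ is very small — and indeed the argument never refers to the other neighbors of $v_1$ and $v_2$, so it goes through verbatim. It is also worth noting that when $j_1=1$ the argument degenerates cleanly, since no vertex is blue at round $0$, yielding an immediate contradiction.
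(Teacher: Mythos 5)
Your proof is correct and is essentially the paper's argument: both rest on the observation that a non-source degree-two vertex can only turn blue after both of its neighbors (in particular its degree-two neighbor) are already blue, which forces an impossible ordering of the rounds at which $v_1$ and $v_2$ first become blue. Your version merely makes the round indices and the WLOG ordering explicit where the paper appeals to symmetry.
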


\begin{proof}
    Consider  a $2$-burning sequence for $G$ and assume for a contradiction neither $v_{1}$ nor $v_{2}$ are sources. The vertex $v_{1}$ can only turn blue after its two neighbors are blue so $v_{1}$ turns blue after $v_{2}$. By symmetry, $v_{2}$ turns blue after $v_{1}$, which is a contradiction. 
\end{proof}

The next lemma is helpful in identifying graphs $G$ with $b_2(G) > t_2(G)$. 

\begin{lemma} \label{waiting-lem}
Let $G$ be a graph  and   let $s$   be  a $2$-burning sequence   for $G$. If every vertex of $s$ is adjacent to a degree two non-source vertex then $\rd(s) \ge \len(s) + 1$.

\end{lemma}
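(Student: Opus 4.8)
The plan is to argue that the last source in the sequence cannot be the vertex that completes the burning, because it has a degree-two non-source neighbor that must still burn afterwards. Let $s = (s_1, s_2, \ldots, s_m)$ with $m = \len(s)$, and consider the run of Algorithm $2$-burning on this sequence. Since $s$ is a $2$-burning sequence, every vertex is blue at termination; I want to show termination occurs at round $\rd(s) \ge m+1$, so it suffices to exhibit a vertex that is not yet blue at the end of round $m$.

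First I would focus on $s_m$, the final source. By hypothesis, $s_m$ is adjacent to some vertex $w$ with $\deg(w)=2$ that is not a source. Let $x$ be the other neighbor of $w$ (so $w$'s only neighbors are $s_m$ and $x$). Since $w$ is not a source, $w$ can only turn blue after both $s_m$ and $x$ are blue; in particular $w$ is not blue until at least one round after $s_m$ becomes blue. Now I need to pin down when $s_m$ becomes blue. The key observation is that $s_m$ can only be colored either by being played as a source at round $m$, or by acquiring two blue neighbors at some earlier round. If $s_m$ turns blue strictly before round $m$ via two blue neighbors, then when round $m$ arrives $s_m$ is already colored, so step (i) of the algorithm does nothing at round $m$ — meaning the source $s_m$ is effectively wasted and the sequence $(s_1, \ldots, s_{m-1})$ would already be a $2$-burning sequence; in that degenerate case one should handle it separately, but even then $w$ still needs $s_m$ and $x$ both blue, and $x \ne w$ is a genuine second neighbor, so $w$ burns at least one round after the later of $s_m, x$ turns blue. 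The cleanest route is: in all cases, $s_m$ is not blue before round $m$ (if it were, it would be blue "for free" and we could have omitted it — but we cannot assume minimality of $s$ here, so instead I argue directly that $w$ is not blue by the end of round $m$). Concretely, $w$ is blue at round $j$ only if both $s_m$ and $x$ were blue at round $j-1$; since $s_m$ is not blue before round $m$ (it is a source at round $m$ and has no earlier blue-inducing event that we're crediting it with — if it did, rerun the argument with that earlier round playing the role of $m$), $w$ is not blue at round $m$. Hence the algorithm has not terminated by round $m$, giving $\rd(s) \ge m+1 = \len(s)+1$.

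The main obstacle is handling the possibility that $s_m$ (or more generally some $s_i$) turns blue on its own before its turn as a source, which makes the phrase "the last source" slippery. I expect to resolve this by either (a) observing that we may pass to the subsequence of sources that are actually uncolored when played, which is still a $2$-burning sequence with the same $\rd$ and a length at most $\len(s)$, and noting that the hypothesis is inherited; or (b) arguing more carefully that whatever the coloring history, the degree-two non-source neighbor $w$ of the final *effective* source is still uncolored at the round that source acts, because $w$'s second neighbor $x$ is distinct from the source and from $w$. Option (a) is probably the tidiest: reduce to the case where every source is uncolored at the moment it is played, let $s_m$ be the true last source, note $s_m$ is blue for the first time at round $m$, and then $w$ — needing both $s_m$ and $x$ blue — cannot be blue until round $m+1$ at the earliest. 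This yields $\rd(s) \ge \len(s)+1$ for the reduced sequence, and since reduction only decreases $\len$ and preserves $\rd$, one recovers the bound for the original $s$ by a short additional remark (or simply proves the lemma for the reduced sequence, which is all later applications need).
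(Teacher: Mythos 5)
Your central argument is the same as the paper's: the last source $s_m$ has a degree-two non-source neighbour $w$, and since $w$ can only turn blue one round after both of its neighbours are blue, $w$ is still uncolored at the end of round $m$, giving $\rd(s)\ge m+1$. Under the reading that each $s_j$ is actually uncolored when it is played (which is how the introduction describes the process --- ``an uncolored vertex is selected as a source'' --- and is what the paper's own one-line proof tacitly assumes when it treats the last source as first turning blue at round $\len(s)$), your direct argument is complete; it is even a little cleaner than the paper's, which asserts $\rd(s)\ge\len(s)$ ``by definition'' and then argues by contradiction, whereas you exhibit an uncolored vertex at round $m$ outright.

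The part that does not work is your proposed repair for the case where $s_m$ is already blue before round $m$. Reduction (a) is unsound: deleting a source that was already colored when played shifts every later source to an earlier round, so the reduced run is a different process; one only gets $\rd(s')\le\rd(s)$, not equality, and since $\len(s')=\len(s)-1$, even a proof of $\rd(s')\ge\len(s')+1$ recovers only $\rd(s)\ge\len(s)$ --- each deleted source costs exactly the ``$+1$'' you need. The fallback of ``rerunning the argument with the earlier round playing the role of $m$'' degrades the bound in the same way. Nor is this a removable technicality: take vertices $s_1,s_2,s_3,s_4,w,w'$ with edges $s_4s_1$, $s_4s_2$, $ws_4$, $ws_1$, $w's_2$, $w's_3$ and the sequence $(s_1,s_2,s_3,s_4)$ of distinct sources. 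Here $w$ and $w'$ are degree-two non-sources, each $s_i$ is adjacent to one of them, yet $s_4$ acquires two blue neighbours and turns blue in round $3$, and $w,w'$ finish in round $4$, so $\rd(s)=4=\len(s)$. The lemma therefore genuinely requires the convention that each source is uncolored when played (the paper's proof assumes this silently); you should state that convention and keep only your direct argument, rather than trying to argue the degenerate case away.
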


\begin{proof}
By definition we know $\rd(s) \ge \len(s) $ so suppose for a contradiction that $\rd(s) = \len(s) $.  Let $v$ be the last source vertex in $s$.    Then $v$ has a  degree two non-source neighbor  $x$, and $x$ cannot turn blue until at least one round after $v$ turns blue, a contradiction.
\end{proof}

In \cite{LQL2021} the authors find the $2$-burning number for paths and cycles using direct arguments.

\begin{theorem} {\rm \cite{LQL2021}  }
If $P_n$ is the path on $n$ vertices and $C_n$ is the cycle on $n$ vertices then 
$b_2(P_n) = b_2(C_n) = \lceil\frac{n}{2}\rceil + 1.$
\label{path-cycle-thm}
\end{theorem}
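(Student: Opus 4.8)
The plan is to prove the lower bound $b_2(P_n),\, b_2(C_n) \ge \lceil n/2\rceil + 1$ and the matching upper bound, treating paths and cycles in parallel.

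For the upper bound I would exhibit explicit $2$-burning sequences. In the path $v_1v_2\cdots v_n$, take as sources the odd-indexed vertices $v_1, v_3, v_5, \ldots$ in consecutive rounds $1, 2, 3, \ldots$, together with $v_n$ appended in the final round when $n$ is even. An easy induction shows that $v_{2i}$ has both neighbours $v_{2i-1}, v_{2i+1}$ blue by round $i+1$ and hence turns blue in round $i+2$, so all vertices are blue by round $\lceil n/2\rceil + 1$. For the cycle the construction is the same: place sources at every second vertex around $C_n$ in consecutive rounds (when $n$ is odd, two of these sources end up adjacent), and the remaining vertices fill in by propagation, the last one turning blue in round $\lceil n/2\rceil + 1$.

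For the lower bound, fix an arbitrary $2$-burning sequence $s$ with $\rd(s) = R$, and let $S \subseteq V$ be the set of vertices that become blue via the source rule (step (i) of Algorithm $2$-burning). Since step (i) colours at most one vertex per round, $|S| \le R$. Every vertex of $V\setminus S$ becomes blue by propagation, i.e.\ upon having two blue neighbours. In a path or cycle every internal vertex has exactly two neighbours, so (in the spirit of Lemma~\ref{adjdeg2lem}) two adjacent vertices $u,v \in V\setminus S$ would each require the other to be blue a round earlier, which is impossible; and a degree-one vertex can never propagate, so by Observation~\ref{obs:deg1} the endpoints of a path lie in $S$. Hence $V\setminus S$ is an independent set avoiding the endpoints, and since $\alpha(C_n) = \lfloor n/2\rfloor$ while an independent set among the $n-2$ internal vertices of $P_n$ has size at most $\lceil (n-2)/2\rceil$, we get $|S| \ge \lceil n/2\rceil$ for $C_n$ and $|S| \ge \lfloor n/2\rfloor + 1$ for $P_n$. (This recovers, and is essentially equivalent to, the percolation lower bound mentioned in the introduction.)

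When $G = P_n$ with $n$ even this already gives $R \ge |S| \ge n/2 + 1$, finishing the argument. In the remaining cases $R \ge |S| \ge \lceil n/2\rceil$, and it remains to exclude $R = \lceil n/2\rceil$. If $R = \lceil n/2\rceil$, then $|S| = R$, so each of the $R$ rounds colours exactly one new source; let $x$ be the source coloured in round $R$. Equality in $|S| = \lceil n/2\rceil$ forces $V\setminus S$ to be a maximum independent set, and a short check on the resulting structure of $S$ shows that every vertex of $S$, and in particular $x$, has a neighbour $y \in V\setminus S$. But $y$ can become blue only by propagation, hence not before the round after $x$ does, i.e.\ not before round $R+1$, contradicting $\rd(s)=R$. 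This yields $b_2(P_n)=b_2(C_n)=\lceil n/2\rceil+1$, after checking a handful of tiny cases ($P_2, P_3, C_3, C_4$) directly. The crux — and the step I expect to require the most care — is this tight case: one must extract simultaneously that $|S|=R$, that exactly one source is coloured per round, and the near-alternating structure of $S$, and then exploit the ``too-late'' non-source neighbour of the final source.
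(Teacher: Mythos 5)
Your proof is correct and follows essentially the same route as the paper: the lower bound comes from observing that non-source vertices form an independent set avoiding the leaves (the content of Observation~\ref{obs:deg1} and Lemma~\ref{adjdeg2lem}), forcing at least $\lceil n/2\rceil$ sources, and then ruling out the tight case because the last source has a non-source neighbour that turns blue one round too late (the content of Lemma~\ref{waiting-lem}); the upper bound is the same alternating construction. The only cosmetic difference is that you treat $P_n$ directly via its endpoints, whereas the paper deduces the path bound from the cycle bound using Lemma~\ref{sameverticessublem}.
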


The lower bound  $b_2(C_n) \ge  \lceil\frac{n}{2}\rceil + 1$
follows from  combining the results in  Lemmas~\ref{adjdeg2lem} and \ref{waiting-lem} and equality holds because selecting every other vertex  (when $n$ is even) and one additional source vertex (when $n$ is odd) yields an optimal $2$-burning sequence of length   $\lceil\frac{n}{2}\rceil$.  The result for paths then follows using Lemma~\ref{sameverticessublem}.

As  above, $C_n$ has an optimal $2$-burning sequence of  length   $\lceil\frac{n}{2}\rceil$ for all $n \ge 3$.   The path 
 $P_n$  has an optimal $2$-burning sequence of  length $\lceil\frac{n}{2}\rceil + 1$ 
  when $n$ is even, and of length $\lceil\frac{n}{2}\rceil$  when $n$ is odd.  We record this below. 
 
\begin{observation}\label{paths-cycles-seq-length}
If $P_n$ is the path on $n$ vertices and $C_n$ is the cycle on $n$ vertices then
$t_2(P_n) = \lceil\frac{n}{2}\rceil + 1$ and $ t_2(C_n) = \lceil\frac{n}{2}\rceil$   when $n$ is even and $t_2(P_n) = t_2(C_n) = \lceil\frac{n}{2}\rceil $ when $n$ is odd.
\end{observation}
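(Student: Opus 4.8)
The plan is to verify, for each of $P_n$ and $C_n$, that the claimed number is simultaneously an upper and a lower bound for $t_2$. The upper bounds cost almost nothing: the discussion immediately preceding the statement already exhibits, for every $n$, an \emph{optimal} $2$-burning sequence for $C_n$ of length $\lceil n/2\rceil$ and an optimal $2$-burning sequence for $P_n$ of length $\lceil n/2\rceil+1$ when $n$ is even and $\lceil n/2\rceil$ when $n$ is odd (these are optimal because their round count equals $b_2(G)$ by Theorem~\ref{path-cycle-thm}). Since $t_2(G)$ is the minimum length of an optimal $2$-burning sequence, this gives the $\le$ direction in every case; if one wanted to be self-contained here one would simply write out the sequences (e.g.\ the alternating vertices $v_1,v_3,\dots$ listed in order, with one endpoint appended in the even-path case) and check the round count, which merely repeats what the surrounding text records.

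For the lower bounds I would prove the slightly stronger statement that \emph{every} $2$-burning sequence for the graph has length at least the claimed value; this suffices since $t_2(G)$ is a minimum taken over a sub-collection of all $2$-burning sequences. Fix a $2$-burning sequence $s=(s_1,\dots,s_m)$ and let $S=\{s_1,\dots,s_m\}$ be its set of source vertices, so $\len(s)=m\ge|S|$. For $C_n$, every vertex has degree two, so Lemma~\ref{adjdeg2lem} forces each edge of $C_n$ to have an endpoint in $S$; hence $S$ is a vertex cover of $C_n$, and since the minimum vertex cover of $C_n$ has size $\lceil n/2\rceil$ (its complement is a maximum independent set, of size $\lfloor n/2\rfloor$), we conclude $\len(s)\ge\lceil n/2\rceil$ and therefore $t_2(C_n)\ge\lceil n/2\rceil$.

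For $P_n$ with vertices $v_1,\dots,v_n$ in path order, the endpoints $v_1$ and $v_n$ are leaves, so $v_1,v_n\in S$ by Observation~\ref{obs:deg1}. The internal vertices $v_2,\dots,v_{n-1}$ all have degree two, and the edges among them are exactly the edges of the sub-path on $v_2,\dots,v_{n-1}$, a copy of $P_{n-2}$; by Lemma~\ref{adjdeg2lem} each of those edges has an endpoint in $S$, so $S\cap\{v_2,\dots,v_{n-1}\}$ is a vertex cover of $P_{n-2}$ and hence has size at least $\lfloor(n-2)/2\rfloor$. Counting the two leaves as well gives $\len(s)\ge|S|\ge 2+\lfloor(n-2)/2\rfloor$, which equals $n/2+1=\lceil n/2\rceil+1$ when $n$ is even and $(n+1)/2=\lceil n/2\rceil$ when $n$ is odd. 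Combining with the upper bounds yields the stated equalities.

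I do not expect a real obstacle: the argument is bookkeeping built on Observation~\ref{obs:deg1} and Lemma~\ref{adjdeg2lem}, using only the elementary facts $\tau(P_k)=\lfloor k/2\rfloor$ and $\tau(C_k)=\lceil k/2\rceil$ about minimum vertex covers. The one thing to keep straight is the parity arithmetic in the path case — the extra $+1$ separating even paths from even cycles comes precisely from the two forced leaf-sources — together with the (already recorded) fact that the sequences invoked for the upper bound genuinely attain $\rd(s)=b_2(G)$ rather than being merely short $2$-burning sequences.
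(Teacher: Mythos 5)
Your proposal is correct and follows essentially the same route as the paper: the upper bounds come from the explicit alternating-source sequences recorded in the surrounding text, and the lower bounds come from counting forced sources via Observation~\ref{obs:deg1} and Lemma~\ref{adjdeg2lem}, exactly as the paper does (implicitly here, and explicitly in the spider argument). Your packaging of the Lemma~\ref{adjdeg2lem} count as a minimum vertex cover of the degree-two subpath is a tidy but equivalent way of doing the same bookkeeping.
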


\subsection{Coronas and joins}

Observation~\ref{obs:deg1} states that for any $2$-burning sequence of a graph $G$, every leaf must be a source vertex.  This motivates us to next consider a family of graphs with many leaves.  For any graph $G$ on $n \geq 2$ vertices, the corona $G \circ K_1$ is the graph on $2n$ vertices  obtained by joining a leaf to each vertex of $G$.

\begin{proposition}
For any graph $G$ on $n \geq 1$ vertices, $b_2(G \circ K_1) = t_2(G \circ K_1) =n+1$. 
\end{proposition}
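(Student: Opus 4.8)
The plan is to prove $b_2(G\circ K_1)\ge n+1$, $b_2(G\circ K_1)\le n+1$, and $t_2(G\circ K_1)\ge n+1$; since $t_2\le b_2$ always holds, the first and third together with the second pin both parameters to $n+1$. Throughout, write $V(G)=\{v_1,\dots,v_n\}$ and let $u_i$ be the leaf of $G\circ K_1$ attached to $v_i$.

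For the lower bounds I would first observe that if no $v_i$ is ever used as a source in a run of Algorithm $2$-burning on $G\circ K_1$, then only leaves are ever coloured blue: by induction on the round number, if every blue vertex so far is a leaf, then a leaf $u_i$ has at most zero blue neighbours (its only neighbour $v_i$ is not a leaf) and a vertex $v_i$ has at most one blue neighbour (only $u_i$ can be blue, since its other neighbours lie in $\{v_1,\dots,v_n\}$), so step~(ii) colours nothing new and only the round's leaf-source is added. Hence in any $2$-burning sequence $s$ some $v_i$ must be a source, and by Observation~\ref{obs:deg1} each of $u_1,\dots,u_n$ must be a source; so at least $n+1$ distinct vertices get coloured as sources during the process. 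Since step~(i) colours at most one source per round, this forces $\rd(s)\ge n+1$, hence $b_2(G\circ K_1)\ge n+1$; and since $u_1,\dots,u_n$ and some $v_i$ must all appear in $s$, also $\len(s)\ge n+1$, hence $t_2(G\circ K_1)\ge n+1$. (For $n=1$ this is immediate since $G\circ K_1=P_2$.)

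For the matching upper bound I would exhibit an explicit optimal sequence of length $n+1$. Using that $G$ is connected, relabel so that for each $i\ge 2$ the vertex $v_i$ has a $G$-neighbour $v_{p(i)}$ with $p(i)<i$ (e.g.\ the discovery order of a BFS or DFS from $v_1$), and take
\[
 s=(v_1,\ u_2,\ u_3,\ \dots,\ u_n,\ u_1),
\]
so that $v_1$ is the source in round $1$, $u_i$ is the source in round $i$ for $2\le i\le n$, and $u_1$ is the source in round $n+1$. The key claim, proved by induction on $i$, is that $v_i$ is blue by the end of round $i+1$ (with $v_1$ blue after round $1$): when $u_i$ is coloured in round $i$, its neighbour $v_{p(i)}$ is already blue (it is $v_1$, or it is blue by the end of round $p(i)+1\le i$ by the inductive hypothesis), so $v_i$ has the two blue neighbours $u_i$ and $v_{p(i)}$ at the end of round $i$ and turns blue in round $i+1$. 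Consequently all of $v_1,\dots,v_n$ are blue by the end of round $n+1$, and the leaves $u_2,\dots,u_n,u_1$ are blue by rounds $2,\dots,n,n+1$ respectively, so $\rd(s)=n+1$. Thus $b_2(G\circ K_1)\le n+1$, and since $s$ is an optimal sequence of length $n+1$ we also get $t_2(G\circ K_1)\le n+1$.

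I expect the main obstacle to be the upper-bound construction rather than the lower bound: one must notice that exactly one $v$-vertex has to be planted as a source, that it should be planted in round $1$ so it can seed percolation among the $v_i$'s, that the remaining leaves then have to be burned in an order consistent with a search tree of $G$ so that each $v_i$ acquires its second blue neighbour right on schedule, and --- crucially for landing on $n+1$ rather than $n+2$ --- that the leaf $u_1$ of the planted vertex should be deferred to the final round, since $v_1$ needs no help. The only delicate point is the off-by-one bookkeeping in the inductive claim ``blue by the end of round $i+1$''.
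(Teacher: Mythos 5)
Your proposal is correct and follows essentially the same route as the paper: the lower bound comes from forcing all $n$ leaves plus at least one vertex of $G$ to be sources, and the upper bound uses the same sequence $(v_1,u_2,\dots,u_n,u_1)$ ordered by a search tree of $G$, with the same inductive ``blue by round $i+1$'' bookkeeping. Your lower-bound induction is just a more explicit version of the paper's one-line observation that each $v_i$ has only one leaf neighbour.
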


\begin{proof}
 Observation~\ref{obs:deg1} implies that each of the $n$ leaves of  $G \circ K_1$ must be a source vertex in a $2$-burning sequence for  $G \circ K_1$.  Each vertex of $G$ is   adjacent to  only one leaf of $G \circ K_1$, thus there must also be a source vertex from among the  vertices of $G$.  Therefore,
   $n+1 \le t_2(G \circ K_1) \le b_2(G \circ K_1) $.    

 It remains to show $ b_2(G \circ K_1) \le n+1$.  Let $v_1$ be any vertex of $G$ and label the rest of the vertices $v_2,\dots,v_n$, according to a breadth-first search rooted at $v_1$.   Let $T$ be the resulting breadth first search tree of $G$, so $v_1$ is the root of $T$ and if $v_i$ is the parent of $v_j$ in $T$ then $i < j$.     Label the leaves of $G \circ K_1$ as $v_1',v_2',\dots,v_n'$ so that $v_i$ is adjacent to $v_i'$ for $1 \leq i \leq n$ and let $s = (v_1,v_2',v_3',v_4',\dots,v_n',v_1')$.
  
  When we run Algorithm $2$-burning on graph $G \circ K_1$ and sequence $s$, we see that for $t: 2 \le t \le n$, vertex $v_t'$ is blue at round $t$ as is the parent of $v_t$ in $T$.  Thus  $v_t$ has two blue neighbors after round $t$ and will be blue by round $t+1$.  Therefore, all vertices of $G \circ K_1$ are blue by round $n+1$ and $s$ is a $2$-burning sequence of length $n+1$ for $G \circ K_1$, proving  that $ b_2(G \circ K_1) \le n+1$.\end{proof}  
   
 While the corona $G \circ K_1$ is a family of graphs with a  high number of leaves, we next consider a family on the other end of the spectrum: graph joins.  The \emph{join} of graphs $G$ and $H$ is the graph denoted $G \vee H$ that has vertex set $V(G) \cup V(H)$ and edge set $E(G) \cup E(H) \cup \{xy: x \in V(G), y \in V(H)\}$.

Consider graphs $G$ and $H$ with   $|V(G)| \ge 2$ and  $|V(H)| \ge 2$.  If we select $s_1, s_2 \in V(G)$ and $s_3 \in V(H)$, then the sequence $(s_1,s_2,s_3)$ is a $2$-burning sequence for $G \vee H$ because all vertices   in the subgraph induced by $V(H)$ turn blue  in round $3$ and  subsequently, all vertices  in the subgraph induced by $V(G)$ are blue by round 4.  We record this in the following. 
   
\begin{observation}\label{obs:join} Let $G$ and $H$ be graphs on $m \geq 2$ and $n \geq 2$ vertices, respectively.  Then $b_2(G \vee H) \leq 4$ and $t_2(G \vee H) \leq 3$.
\end{observation}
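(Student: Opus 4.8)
The plan is to turn the argument sketched in the paragraph immediately before the statement into a precise proof, and then add the one short extra step needed to get the $t_2$ bound. Since $|V(G)| = m \ge 2$ and $|V(H)| = n \ge 2$, I can choose two distinct vertices $s_1, s_2 \in V(G)$ and a vertex $s_3 \in V(H)$, and I claim that $s = (s_1, s_2, s_3)$ is a $2$-burning sequence for $G \vee H$ with $\rd(s) \le 4$.

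I would verify this by tracing Algorithm $2$-burning on $s$. In round $1$ the source $s_1$ is colored, and in round $2$ the source $s_2$ is colored (note $s_2$ is still uncolored, since only $s_1$ was blue after round $1$ and so no vertex yet had two blue neighbors). After round $2$ the blue set is $\{s_1, s_2\}$. In $G \vee H$ every vertex of $H$ is adjacent to every vertex of $G$, hence to both $s_1$ and $s_2$; therefore in round $3$ every uncolored vertex of $H$ has two blue neighbors and is colored, while $s_3$ is colored by step (i) if it has not already been colored. Thus all of $V(H)$ is blue after round $3$. Since $n = |V(H)| \ge 2$ and every vertex of $G$ is adjacent to every vertex of $H$, each still-uncolored vertex of $G$ has at least two blue neighbors after round $3$, so it is colored in round $4$. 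Hence all vertices are blue by round $4$, which shows $s$ is a $2$-burning sequence with $\rd(s) \le 4$ and therefore $b_2(G \vee H) \le 4$.

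For $t_2(G \vee H) \le 3$, recall that $\rd(s') \ge b_2(G \vee H)$ for every $2$-burning sequence $s'$, and we have just shown $\rd(s) \le 4$ for our length-$3$ sequence $s$. If $\rd(s) = b_2(G \vee H)$, then $s$ is itself an optimal $2$-burning sequence of length $3$ and we are done. Otherwise $b_2(G \vee H) < \rd(s) \le 4$, so $b_2(G \vee H) \le 3$; taking any optimal $2$-burning sequence and deleting all its sources after the $b_2(G \vee H)$-th one (these are never read, since the algorithm halts at round $b_2(G \vee H)$) yields an optimal sequence of length at most $3$. In either case $t_2(G \vee H) \le 3$. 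The only real subtlety is the one handled in this last step, namely not conflating ``$\rd(s) \le 4$'' with ``$s$ is optimal''; the join structure supplies everything else with no difficulty.
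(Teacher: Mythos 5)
Your proof is correct and follows the same approach as the paper, which establishes the observation via the informal paragraph preceding it: take two sources in $V(G)$ and one in $V(H)$, so all of $V(H)$ is blue after round $3$ and all of $V(G)$ after round $4$. Your extra care in deducing $t_2(G\vee H)\le 3$ without assuming the length-$3$ sequence is optimal is a welcome tightening of a step the paper leaves implicit, but it is not a different argument.
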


When we apply Observation~\ref{obs:join} to the graphs $G = \overline{K_n}$ and $H = \overline{K_m}$ we get an upper bound  of $4$ for the $2$-burning number of the complete bipartite graph $K_{m,n}$.  In \cite{LQL2021} the authors proved directly that $b_2(K_{m,n}) = 4$ if $n,m \ge 4$ and   $b_2(K_{m,n}) = 3$ if $m \in \{2,3\}$ or $n \in \{2,3\}$.

A graph $G$ with universal vertex $u$ is an example of a graph join: $G = K_1 \vee (G-u)$ and Observation~\ref{obs:join} can be applied.  However, for a graph $G$ with universal vertex $u$, we can also bound the $2$-burning number of $G$ by the $1$-burning number of $G-u$.  We will use this idea later in the proof of Theorem~\ref{wheel-thm}, but state a more general result next. For a graph $G$, a set $D \subseteq V(G)$ is \emph{dominating set} if every vertex of $G$ is either in   $D$ or has a neighbor in $D$.  We denote by $\gamma(G)$, the minimum cardinality of a dominating set on graph $G$.   

\begin{proposition}\label{prop:univ} Let $G$ be a graph and $D \subseteq V(G)$ be a dominating set of cardinality $\gamma(G)$.  Then $b_2(G) \leq \gamma(G)+b_1(G - D)$.  
\end{proposition}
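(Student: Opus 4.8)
The plan is to build a $2$-burning sequence for $G$ in two phases: first use the dominating set $D$ to get all of $D$ blue, and then piggyback on an optimal (ordinary) burning sequence for $G - D$, using the already-blue vertices of $D$ as a ``free'' second neighbor for every vertex of $G - D$.

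First I would pick the dominating set $D = \{u_1, u_2, \dots, u_{\gamma(G)}\}$ and use $u_1, \dots, u_{\gamma(G)}$ as the first $\gamma(G)$ sources, so that all vertices of $D$ are blue by round $\gamma(G)$ (each $u_i$ is colored blue in round $i$, or earlier if it happens to have two blue neighbors already). Then I would take an optimal $1$-burning sequence $(x_1, x_2, \dots, x_k)$ for the graph $G - D$, where $k = \rd$ of that sequence $= b_1(G-D)$ (in ordinary burning the length and the round-count coincide), and append it, forming $s = (u_1, \dots, u_{\gamma(G)}, x_1, \dots, x_k)$. The key observation is that because $D$ is dominating, every vertex of $G - D$ has at least one neighbor in $D$, and that neighbor is already blue by the time the second phase starts. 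So in the second phase a vertex $v \in V(G-D)$ needs only \emph{one} blue neighbor inside $G - D$ to turn blue in $G$ — which is exactly the $1$-burning rule on $G - D$. Hence, by a straightforward induction on rounds, if $v$ is blue after round $t$ in the $1$-burning process on $G - D$, then $v$ is blue after round $\gamma(G) + t$ in the $2$-burning process on $G$ with sequence $s$. (One subtlety: in the $2$-burning process a vertex of $G - D$ could also be helped by two neighbors that are both outside $D$, or turn blue even earlier; that only speeds things up, so the bound is preserved.)

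Since the $1$-burning sequence colors all of $V(G - D)$ blue by round $k = b_1(G-D)$, the appended sequence $s$ colors all of $V(G)$ blue by round $\gamma(G) + b_1(G-D)$, so $s$ is a $2$-burning sequence with $\rd(s) \le \gamma(G) + b_1(G-D)$, giving $b_2(G) \le \gamma(G) + b_1(G-D)$ as claimed. I expect no real obstacle here; the only points needing care are (i) confirming that appending a source sequence that was designed for $G - D$ still behaves correctly in $G$ — the extra edges from $D$ to $V(G-D)$ only ever help, never hurt — and (ii) handling the edge cases where $G - D$ is empty or disconnected (if $G - D$ is empty the bound reads $b_2(G) \le \gamma(G)$, which holds trivially once $\gamma(G) \ge 2$, and one should note the proposition is really of interest when $G - D$ is connected, matching the standing assumption on the graphs to which $b_1$ is applied).
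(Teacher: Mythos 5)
Your proposal is correct and follows exactly the same route as the paper's proof: take the dominating set $D$ as the first $\gamma(G)$ sources, observe that afterwards every vertex of $G-D$ already has one blue neighbor in $D$, and then run an optimal $1$-burning sequence on $G-D$. The paper states the reduction in one line; you have simply filled in the induction-on-rounds justification and the degenerate cases, which is fine but not a different argument.
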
 

\begin{proof} Let $G$ be a graph and $D \subseteq V(G)$ be a dominating set of cardinality $\gamma(G)$.  We choose the vertices of $D$ (in any order) to be the first $\gamma(G)$ source vertices for $G$.  At the end of round $\gamma(G)$, every vertex in $G-D$ is adjacent to a blue vertex.  Thus, the process reduces to the $1$-burning process on $G-D$.  
\end{proof} 

\section{The classes of spiders and wheels}

In this section, we consider two classes of graphs:  spiders and wheels.  For each, we are able to find the exact value for the $2$-burning number and the $2$-burning source number.  These classes provide an interesting contrast between $b_2(G)$ and $t_2(G)$. 
For spiders these quantities differ by at most 1, while for wheels they can be arbitrarily far apart.

\subsection{Spider Graphs}
 
The spider graph $S_{n_{1},n_{2},..,n_{r}}$  consists of  a central vertex $v^{\ast}$ of degree $m$,  and paths of lengths $n_1, n_2, \ldots, n_r$  emanating from $v^{\ast}$. Thus if we remove $v^{*}$ from  $S_{n_{1},n_{2},..,n_{r}}$, the resulting graph consists of the paths $P_{n_{1}}, P_{n_{2}},..,P_{n_{r}}$.   Figure~\ref{fig-spider} shows four spider graphs.   Note that when $r=1$ or $r=2$ the spider graph is   a path graph, thus we assume $r \ge 3$.

Interestingly, although determining the $1$-burning number of a spider graph is NP-hard~\cite{BessyEtAl}, we next determine the $2$-burning number exactly.

\begin{theorem}
    Let  $G$ be the spider graph  $S_{n_{1},n_{2},..,n_{r}}$ where  $r \geq 3$ and $n=n_{1}+n_{2}+...+n_{r}+1$ and let $k$ be the number of $n_i$ that are odd.  Then $b_{2}(G)=\lceil\frac{n}{2}\rceil+1$ if $k \le 2$ and $b_2(G) = \frac{n+k-1}{2}$ if $k \ge 3$.
    \label{spider-thm}
\end{theorem}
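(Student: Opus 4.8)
The plan is to bound $\rd(s)$ above and below over all $2$-burning sequences $s$, treating $k\le 2$ and $k\ge 3$ in parallel. Write the $i$-th leg as the path $v^{\ast}=u^{(i)}_0,u^{(i)}_1,\dots,u^{(i)}_{n_i}$ (so $u^{(i)}_{n_i}$ is a leaf), and for a fixed $2$-burning sequence $s$ let $a_i$ be the number of sources among $u^{(i)}_1,\dots,u^{(i)}_{n_i}$; call leg $i$ \emph{rooted} if $u^{(i)}_1$ is a source. The foundational step is a per-leg count: in any $2$-burning sequence, $a_i\ge\lceil n_i/2\rceil$, and $a_i\ge\lceil (n_i+1)/2\rceil$ if leg $i$ is rooted. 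To see this, view leg $i$ together with $v^{\ast}$ as a copy of $P_{n_i+1}$: since $s$ burns $G$, the vertex $v^{\ast}$ and every leg-source eventually turns blue, and between two consecutive such blue ``anchors'' there can be at most one non-source, because a run of two or more consecutive non-sources flanked by blue anchors can never burn -- the non-source nearest each anchor would need its other neighbour blue first, a circular dependency like the one in the proof of Lemma~\ref{adjdeg2lem}. Counting anchors and gaps in $P_{n_i+1}$ gives the two bounds (when leg $i$ is rooted the $v^{\ast}$-end gap is empty, which costs an extra source when $n_i$ is even). Summing over legs and using $\sum_i\lceil n_i/2\rceil=\tfrac{n-1+k}{2}$ yields $\len(s)\ge\tfrac{n+k-1}{2}$, with an extra $+1$ when $v^{\ast}$ is itself a source. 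As $\rd(s)\ge\len(s)$, this is the complete lower bound $b_2(G)\ge\tfrac{n+k-1}{2}$ when $k\ge 3$.

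For $k\le 2$ an extra round must be extracted, using two further facts: (i) if $v^{\ast}$ is not a source then at least two legs are rooted, since the vertex $u^{(i)}_1$ of a non-rooted leg is a non-source that turns blue only after $v^{\ast}$ (it needs $v^{\ast}$ and $u^{(i)}_2$ both blue), so it cannot be one of the two neighbours whose colouring ignites $v^{\ast}$; and (ii) Lemma~\ref{waiting-lem}. If $v^{\ast}$ is a source then the count gives $\len(s)\ge 1+\tfrac{n+k-1}{2}$, which already equals $\lceil n/2\rceil+1$ for $k\in\{1,2\}$, and for $k=0$ forces the unique tight configuration in which every leg uses exactly $\lceil n_i/2\rceil$ sources, located at its even-indexed vertices; there $v^{\ast}$ and every source are adjacent to a degree-two non-source, so Lemma~\ref{waiting-lem} gives $\rd(s)\ge\len(s)+1\ge\lceil n/2\rceil+1$. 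If $v^{\ast}$ is not a source, combining (i) with the rooted-leg bound forces $\len(s)\ge\lceil n/2\rceil+1$ except when $k=2$, both odd legs are the rooted ones, and all legs are tight. In that residual case every source is adjacent to a degree-two non-source \emph{except} possibly the leaf of a length-one leg; if $s$'s last source is of the former kind, Lemma~\ref{waiting-lem} applies, and if it is a length-one leg's leaf then it is a rooted $u^{(i)}_1$, so $v^{\ast}$ cannot become blue before the round after this source is placed (it needs both rooted legs' $u_1$-vertices blue); either way $\rd(s)\ge\len(s)+1\ge\lceil n/2\rceil+1$.

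For the upper bound we build matching sequences. When $k\ge 2$, take $v^{\ast}$ to be a non-source, designate two odd legs as \emph{feeders}, place the first vertex $u^{(i)}_1$ of each feeder first (so $v^{\ast}$ ignites by round $3$), and then process the legs one at a time, within each leg firing its sources consecutively from $v^{\ast}$ outward. The feeders use the alternating source pattern; every other leg uses a pattern whose last non-source turns blue on the same round its last source is placed -- for an odd leg, take the pattern with its two outermost sources consecutive; an even leg admits no such pattern, contributing one unavoidable ``lag'' round -- and we order the legs so the final one has this no-lag property whenever possible, namely an odd leg if $k\ge 3$ (the single even-leg lag then accounts for the extra round exactly when $k=2$). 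When $k\le 1$ we instead place $v^{\ast}$ first and proceed the same way. A direct computation shows the process finishes in $\tfrac{n+k-1}{2}$ rounds when $k\ge 3$ and in $\lceil n/2\rceil+1$ rounds when $k\le 2$, matching the lower bounds. I expect this last computation to be the main obstacle: one must check that the lag of a feeder leg, and the lag of an even leg processed before the end, never push any leg's completion beyond the global deadline, which reduces to the fact that a spider with $r\ge 3$ legs has at least $\lceil n_j/2\rceil+2$ sources available for each individual leg $j$; the short-leg corner cases of the $k=2$ lower bound are the other point needing care.
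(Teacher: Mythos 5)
Your proposal is correct and takes essentially the same route as the paper: the lower bound rests on the same per-leg source counts (leaf and adjacent-degree-two arguments, with your uniform ``rooted-leg'' bonus packaging what the paper does by counting how many of the vertices $x_i$ adjacent to $v^{\ast}$ are sources) together with last-source/waiting arguments to extract the extra round when $k\le 2$, and the upper bound uses the same alternating source patterns on the legs. The only soft spot is that the finishing-time check for your sequential ``one leg at a time, no-lag leg last'' ordering is asserted rather than carried out, but it goes through routinely (every leg processed before the last has its final source placed at least one round before the final round, so its one-round lag still meets the deadline, and the designated last leg finishes exactly when its leaf is placed), so nothing essential is missing.
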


\begin{proof}
    Let $G$ be the spider graph $S_{n_{1},n_{2},..,n_{r}}$ and $v^{*}$ the central vertex. As discussed above,  $G- v^{*}$ is the graph whose components are  the paths  with $n_i$ vertices for   $1 \leq i \leq r$.  We denote by $H_i$ the $i$th component of $G-v^{*}$, which is a  path with $n_i$ vertices.  Let $x_i$ be the vertex of  $H_i$ that is adjacent to $v^{\ast}$ and $y_i$ be the vertex of $H_i$ that is farthest from $v^{\ast}$. 
 We construct $2$-burning sequences for $G$  using four cases that depend on  the value of $k$.
 In each case below,  when $n_i$ is even we choose $y_i$ and every other vertex of $H_i$ starting at $y_i$ to be a source.  Thus we have $\frac{n_i}{2}$ sources from $H_i$ for the even paths.  We will select $\frac{n_i + 1}{2}$ sources from $H_i$ when $n_i$ is odd but the particular sources chosen from these paths will vary in different cases.
 
 \smallskip
 
 First consider the case $k=0$,   which is illustrated for  $S_{4,4,4,2,2}$  in Figure~\ref{fig-spider} (a).  In this case, $n$ is odd.    In addition to the sources chosen from the $H_i$,  we also select $v^{\ast}$ as a source.   So the number of sources is  $1 + \frac{1}{2}( n_1 + n_2 + \cdots + n_r) = 1 + \frac{n-1}{2} = \lceil\frac{n}{2}\rceil$.  Every non-source vertex of $G$ is adjacent to two of these sources, so every vertex turns blue by round $\lceil \frac{n}{2} \rceil +1$ and thus $b_{2}(G)  \le \lceil \frac{n}{2} \rceil +1$. In this case,   the sources can  be arranged in any order.

\smallskip
 
Next consider  the case $k=1$,   which is illustrated for $S_{3,4,4,2}$ in Figure~\ref{fig-spider}(b).  In this case, $n$ is even.   Without    loss of generality, let $H_{1}$ be the odd path.  If $n_1=1$ let $y_1$ be a source, and if $n_1 \ge 3$,   let $y_1$ and its neighbor  $z_1$ be sources, as well as every other vertex starting at $z_1$.   In addition to the sources chosen from the $H_i$,  we also select $v^{\ast}$ as a source.   So the number of sources is  $1 + \frac{1}{2}( 1 +n_1 + n_2 + \cdots + n_r) = 1 + \frac{n}{2} = \lceil\frac{n}{2}\rceil + 1$.   Arrange the sources  so that $v^{*}$ is the first and  $y_1$ is the last.  The   remaining sources can  appear  in any order.   
        Every  non-source vertex is adjacent to two sources,  so all vertices will turn blue and we have constructed a $2$-burning sequence for $G$.  Furthermore, the only neighbor of the last source $y_1$ is itself a source vertex, thus all vertices are blue by round $\lceil\frac{n}{2}\rceil + 1$, and hence 
         $b_{2}(G) \le    \lceil \frac{n}{2} \rceil +1$.
 
 \smallskip
Our third case is   $k=2$,   which is illustrated for $S_{5,3,4,2}$  in Figure~\ref{fig-spider} (c).  In this case, $n$ is odd.  
Without loss of generality,  let $H_{1}$ and $H_2$  be the odd paths, and for these paths (as well as the even ones)   let $y_i$ and and every other vertex starting at $y_i$ be a source. 
  This means that $x_1$ and $ x_2 $ will be chosen as sources.  In this case we do \emph{not} select $v^{\ast}$ to be a source, but we do  select $x_1$ and $x_2$  as the first  two sources  so $v^{\ast}$ will turn blue in round $3$.   The number of sources is $ \frac{1}{2}( 2 +n_1 + n_2 + \cdots + n_r) =  \frac{1}{2}(1+n) =   \lceil \frac{n}{2} \rceil$.  Since $k=2$ and $r \ge 3$, we know $n \ge 5$ so $ \lceil \frac{n}{2} \rceil \ge 3$  and there are a sufficient number of rounds for $v^\ast$ to turn blue.  After  all the sources turn blue, every non-source is adjacent to  two blue vertices, so in one additional round, all the vertices become blue.  Thus $b_{2}(G) \le    \lceil\frac{n}{2}\rceil+1$.
    
 \smallskip
Finally, we consider the case    $k \ge 3$,  which is illustrated for $S_{5,3,3,3,3,4}$ in Figure~\ref{fig-spider}(d).  Without loss of generality, assume that $n_i$  is odd for $1 \le i \le 3$. 
 For the odd path $H_1$,  if $n_1=1$ let $y_1$ be a source, and if $n_1 \ge 3$,   
  let $y_1$ and its neighbor  $z_1$ be sources, as well as every other vertex starting at $z_1$.  For the remaining odd paths (as well  as the even ones),     let $y_i$ and every other vertex starting at $y_i$ be a source.  This means that $x_2$ and $x_3$ will be sources.   In this case we do \emph{not} select $v^{\ast}$ to be a source, but we do  select $x_2$ and $x_3$  as the first  two sources  so $v^{\ast}$ will turn blue in round $3$ and  we select $y_1$ to be the last source.     The   remaining sources can   appear  in any order.   There are  $\frac{n_i}{2}$ sources for each $H_i$ when $n_i$ is even and  $\frac{1+n_i}{2}$ sources for each $H_i$ when $n_i$ is odd.  Thus   the number of source vertices is  $  \frac{1}{2}(k + n_1 + n_2 + \cdots + n_m) = \frac{1}{2}(n+k-1) $.  
   Note that this number is an integer since $k$ and $n$ have opposite parity.  
   Every  non-source vertex is  either adjacent to two sources or to $v^{\ast}$ and  one source,  so all vertices will turn blue and we have constructed a $2$-burning sequence for $G$.  Furthermore, the only neighbor of the last source $y_1$ is itself a source vertex, thus all vertices are blue by round $\frac{1}{2}(n+k-1) $, hence         $b_{2}(G) \le   \frac{1}{2}(n+k-1) $.
   
   \smallskip
   
 It remains to show that  $b_{2}(G) \ge    \lceil\frac{n}{2}\rceil+1$ for $k \le 2$ and  $b_{2}(G) \ge   \frac{1}{2}(n+k-1) $ for $k \ge 3$.
   We begin by  calculating the minimum number of source  vertices in $H_i$ in a $2$-burning sequence.
 Of the   vertices of $H_i$, vertex $y_i$ is a leaf of $G$ and the remaining $n_i-1$ vertices have degree 2 in $G$.  By Lemma~\ref{adjdeg2lem},  at least $\frac{n_{i}-2}{2}$ of these degree two vertices must be sources and by Observation~\ref{obs:deg1}, the leaf $y_i$ must also be a source. Thus there are at least $\frac{n_{i}}{2}$ vertices in $H_{i}$ that are sources.   Summing over all $i$, we get at least $  \lceil \frac{n-1}{2} \rceil $ sources from $G-v^{\ast}$ in any $2$-burning sequence of $G$. 
    
In the first case, $k=0$,  so $n$ is odd and  the number of sources from  $G-v^{\ast}$ in any $2$-burning sequence is at least $ \lceil \frac{n}{2} \rceil - 1$.  If none of the $x_i$ are sources, then $v^{\ast}$ must be a source, so there are at least $  \lceil \frac{n}{2} \rceil $ sources. However, in this instance, every non-source has degree $2$ and every source is adjacent to a non-source, so by Lemma~\ref{waiting-lem}, the $2$-burning sequence requires  $  \lceil \frac{n}{2} \rceil  + 1 $ rounds.  If exactly one of the $x_i$ are sources, then again  $v^{\ast}$ must be a source, so there are at least $  \lceil \frac{n}{2} \rceil + 1 $ sources.  Otherwise, at least two of the $x_i$ are sources and again there are at least $  \lceil \frac{n}{2} \rceil + 1 $ sources.   Thus any $2$-burning sequence requires at least $\lceil \frac{n}{2} \rceil  + 1 $ rounds and $b_{2}(G) \ge    \lceil\frac{n}{2}\rceil+1$.

   In the next case, $k=1$, so $n$ is even and the number of sources from  $G-v^{\ast}$ in any $2$-burning sequence of $G$ is at least   $  \lceil \frac{n}{2} \rceil $.  If only one $x_i$ is a source then $v^{\ast}$ must be a source, so in any case there are at least  $\lceil \frac{n}{2} \rceil  + 1$ sources.  Thus any $2$-burning sequence requires at least $\lceil \frac{n}{2} \rceil  + 1 $ rounds and $b_{2}(G) \ge    \lceil\frac{n}{2}\rceil+1$.
   
   In the case $k=2$ we again have $n$ odd and we may assume that the odd paths are $H_1$ and $H_2$.   In any $2$-burning sequence  there are at least $\frac{1 + n_i}{2} $ sources from the odd path $H_i$ for $i = 1,2$, so the total number of sources from  $G-v^{\ast}$  is it least $\frac{1}{2}(2 + n_1 + n_2 + \cdots + n_r) =  \lceil \frac{1+n}{2} \rceil  =  \lceil \frac{n}{2} \rceil $.     Suppose for a contradiction that there exists a $2$-burning sequence $s$ with $\len(s) = \rd(s) =  \lceil \frac{n}{2} \rceil.$  Consider the last source $s_k$ of $s$ (i.e., $k =  \lceil \frac{n}{2} \rceil $).  If $s_k = x_1$ or $s_k = x_2$ then $v^{\ast}$ does not turn blue until round $ \lceil \frac{n}{2} \rceil + 1$, a contradiction.  Otherwise, $s_k$ is adjacent to a degree $2$ non-source vertex and that vertex does not turn blue until round  $\lceil \frac{n}{2} \rceil  + 1$, a contradiction.   Thus any $2$-burning sequence requires at least $\lceil \frac{n}{2} \rceil  + 1 $ rounds and $b_{2}(G) \ge    \lceil\frac{n}{2}\rceil+1$.
   
   Finally, we consider the case $k \ge 3$.  In any $2$-burning sequence for $G$ there are 
   at least $\frac{n_i}{2}$ sources from each $H_i$ when $n_i$ is even and  at least $\frac{1+n_i}{2}$ sources from each $H_i$ when $n_i$ is odd.  Thus the total number of source vertices is at least $ \frac{1}{2}(k + n_1 + n_2 + \cdots + n_r) = \frac{1}{2}(k+n-1) $ and consequently $b_{2}(G) \ge \frac{1}{2}(k+n-1).$
   \end{proof}
   
 \begin{figure}[htbp]
 \[ \includegraphics[width=\textwidth]{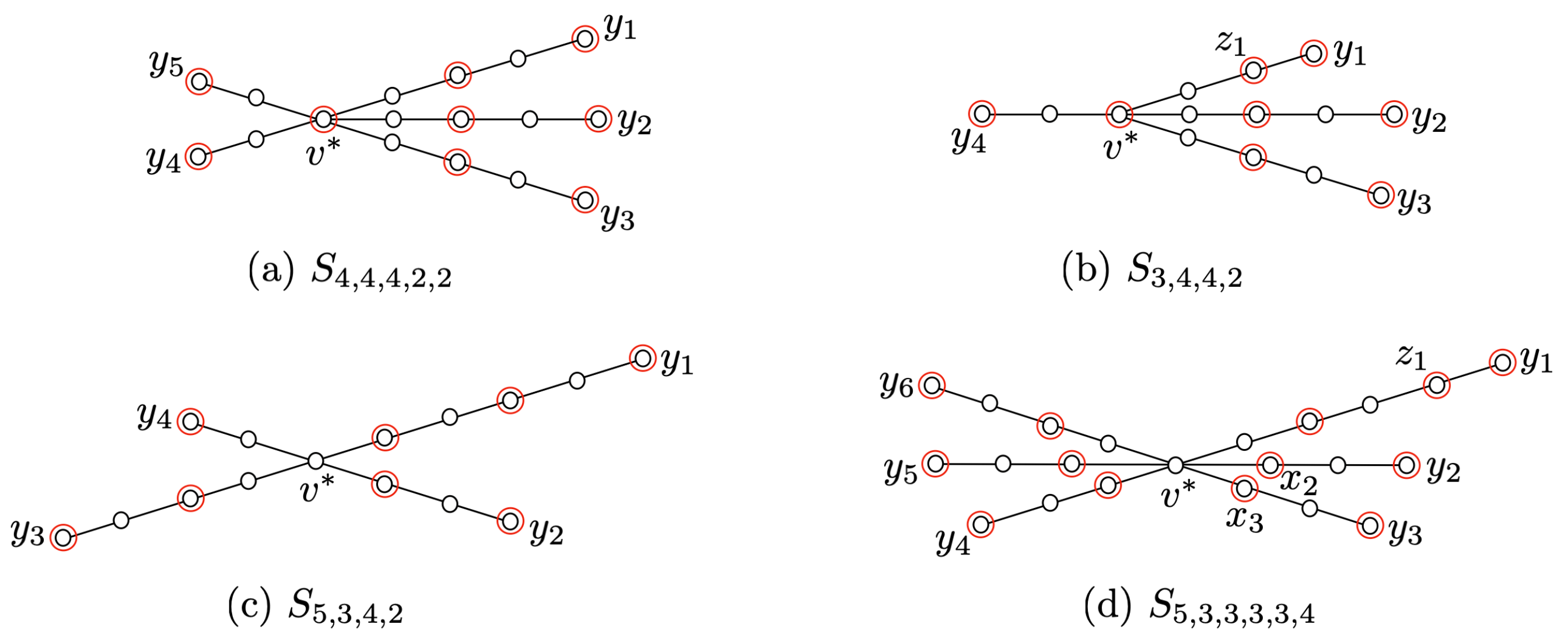}
 \]
   \caption{Four spider graphs in which the circled vertices are sources in an optimal $2$-burning sequence.}
   \label{fig-spider}
   \end{figure}
   
   \begin{corollary}
    Let  $G$ be the spider graph $S_{n_{1},n_{2},..,n_{r}}$ where $r \geq 3$ and $n=n_{1}+n_{2}+...+n_{r}+1$ and let $k$ be the number of $n_i$ that are odd.  If $k=0$ or $k=2$ then $t_2(G) = b_2(G) -1 $  and if  $k=1$ or $k \ge 3$ then $t_2(G) = b_2(G).$ 
\end{corollary}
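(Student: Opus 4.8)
The plan is, for each value of $k$, to sandwich $t_2(G)$ between an upper bound coming from a concrete optimal $2$-burning sequence and a lower bound coming from a count of how many sources \emph{any} $2$-burning sequence must use; the key point is that the length of a $2$-burning sequence is at least its number of distinct sources, so $t_2(G)$ is at least the minimum number of sources used by any $2$-burning sequence for $G$. For the upper bounds, when $k=1$ or $k\ge 3$ nothing is needed beyond the inequality $t_2(G)\le b_2(G)$ noted after the definition of $t_2$, while when $k\in\{0,2\}$ I would verify that the sequence built in the corresponding case of the proof of Theorem~\ref{spider-thm} uses exactly $\lceil n/2\rceil$ sources and colors the whole graph by round $\lceil n/2\rceil+1=b_2(G)$, so it is an optimal $2$-burning sequence of length $b_2(G)-1$ and $t_2(G)\le b_2(G)-1$.

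For the lower bounds, write $H_i$ for the $i$-th leg, with end $x_i$ adjacent to $v^\ast$ and leaf $y_i$. By Observation~\ref{obs:deg1} the leaf $y_i$ is a source, and by Lemma~\ref{adjdeg2lem} the sources among the $n_i-1$ degree-two vertices of $H_i$ form a vertex cover of the path on those vertices, so $H_i$ contributes at least $\lceil n_i/2\rceil$ sources to any $2$-burning sequence; since $x_i$ is an endpoint of that path, requiring $x_i$ to be a source raises this contribution to $\lceil n_i/2\rceil+1$ exactly when $n_i$ is even. Summing over $i$ shows that $G-v^\ast$ supplies at least $\sum_i\lceil n_i/2\rceil=\tfrac{1}{2}(n+k-1)$ sources, which at once gives $t_2(G)\ge\tfrac{1}{2}(n+k-1)=b_2(G)$ when $k\ge 3$ and $t_2(G)\ge\lceil n/2\rceil=b_2(G)-1$ when $k=2$.

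The cases $k=0$ and $k=1$ need one more observation: if $v^\ast$ is not a source then at least two of the $x_i$ must be sources, because $v^\ast$ can turn blue only by having two blue neighbors and a degree-two non-source neighbor $x_i$ of $v^\ast$ cannot turn blue before $v^\ast$ does. When $k=0$ every leg is even, so if $v^\ast$ is not a source those two forced $x_i$ each contribute an extra source, while if $v^\ast$ is a source it contributes one; either way every $2$-burning sequence has at least $\lceil n/2\rceil$ sources, which matches the upper bound and gives $t_2(G)=b_2(G)-1$. When $k=1$, at most one of the forced $x_i$ can lie on the unique odd leg, so at least one lies on an even leg and contributes an extra source beyond the baseline $\sum_i\lceil n_i/2\rceil=\lceil n/2\rceil$; hence every $2$-burning sequence has at least $\lceil n/2\rceil+1=b_2(G)$ sources and $t_2(G)=b_2(G)$. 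I expect the delicate step to be precisely this last bookkeeping: one must be sure that the extra source forced on an even leg is counted beyond $\lceil n_i/2\rceil$ and not conflated with $y_i$ or with $v^\ast$, which is why the parities of the individual $n_i$ and of $n$ must be tracked carefully throughout.
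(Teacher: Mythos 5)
Your proposal is correct and follows essentially the same route as the paper: the paper's corollary proof simply cites the source-counting lower bounds and the explicit constructions established inside the proof of Theorem~\ref{spider-thm}, and your leaf-plus-vertex-cover count per leg together with the forced-$x_i$/$v^\ast$ dichotomy reconstructs exactly those bounds. The only nit is that in the $k=1$ case you should also state the (immediate) branch where $v^\ast$ is a source, which adds one source beyond the baseline $\lceil n/2\rceil$ just as in your $k=0$ analysis.
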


\begin{proof}
In the proof of Theorem~\ref{spider-thm} we show that for $k=0$ and $k=2$, every $2$-burning sequence $s$ has $\len(s) \ge \lceil \frac{n}{2} \rceil$ and there exists a $2$-burning sequence of that length.  Thus when $k=0$ or $k=2$ we have $t_2(G) = b_2(G) - 1$.  When $k=1$ the proof shows that $\len(s) \ge  \lceil \frac{n}{2} \rceil + 1$ for any $2$-burning sequence $s$, hence $t_2(G) = b_2(G)$.   Finally, when $k \ge 3$ we show $\len(s) \ge \frac{1}{2}(n + k - 1)$ in the proof of Theorem~\ref{spider-thm}, so $t_2(G) = b_2(G)$.
\end{proof}
   
\subsection{Wheel Graphs}

The \emph{wheel graph} $W_{n}$ is a graph formed from the cycle $C_{n}$ by adding a central vertex
  that is adjacent to all the vertices of the cycle.  By Proposition~\ref{prop:univ}  we know,  
  $b_2(W_{n+1}) \leq 1+b_1(C_n) = 1+\lceil \sqrt{n} \ \rceil$; the latter equality due to~\cite{BJR2016}. 
   In  Theorem~\ref{wheel-thm} we determine $b_2(W_{n+1})$ exactly, and 
Figure~\ref{fig-wheel} shows the optimal $2$-burning sequences for $W_{30}$ and $W_{22}$ that are constructed in the proof.

\begin{theorem}
    If $n \geq 5$, then $b_{2}(W_{n})=\lceil \sqrt{n+6} \  \rceil$ and $b_{2}(W_{4})=3$.
    \label{wheel-thm}
\end{theorem}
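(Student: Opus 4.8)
Write $T=\lceil\sqrt{n+6}\,\rceil$. For $n\ge 5$ the plan is to prove $b_2(W_n)\le T$ and $b_2(W_n)\ge T$ separately, and to handle $W_4$ directly. Denote the hub by $h$ and the rim by $v_0,\dots,v_{n-1}$ (indices mod $n$), so $N(v_i)=\{h,v_{i-1},v_{i+1}\}$. For $W_4$: at most two vertices are blue after round $2$ while $W_4$ has five vertices, so $b_2(W_4)\ge 3$; and the sequence $(v_0,v_2)$ turns $v_1$, $v_3$ and $h$ all blue in round $3$, so $b_2(W_4)=3$. For the rest, two observations drive everything. (1) Once $h$ is blue, a rim vertex turns blue precisely when it is a source or has a blue rim-neighbour, so from that point on the rim runs ordinary $1$-burning on $C_n$. (2) Before $h$ is blue, a non-source rim vertex can turn blue only when \emph{both} of its rim-neighbours are already blue; I would first upgrade this to the lemma that, letting $t_0$ be the round in which $h$ turns blue, no non-source rim vertex is blue before round $t_0$ — the earliest offending vertex would force both its rim-neighbours to be sources, whence $h$, being adjacent to both, would already be blue before round $t_0$, a contradiction.

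For the upper bound, choose $s_1$ and $s_2$ to be distinct rim vertices; then $h$ becomes blue in round $3$, i.e.\ $t_0=3$. A source placed in round $j\le 3$ begins to spread along the rim in round $4$ and so by round $T$ has burned an interval of radius $T-3$ centred at it; a source placed in round $j\ge 4$ burns an interval of radius $T-j$. Using sources $s_1,\dots,s_T$, the intervals have total length
\[
3(2T-5)+\sum_{j=4}^{T}\bigl(2(T-j)+1\bigr)=3(2T-5)+(T-3)^2=T^2-6\ \ge\ n .
\]
Since each interval has length at most $n$ (which holds for $n\ge 5$) and the slack $T^2-6-n$ is nonnegative, a short packing argument places the source positions around $C_n$ so that the intervals cover the rim while, by absorbing the slack at a single junction, no source vertex is coloured before its own round; this yields a $2$-burning sequence $s$ with $\rd(s)=T$, so $b_2(W_n)\le T$.

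For the lower bound, fix an optimal $2$-burning sequence with $\rd(s)=T$ and let $t_0$ be the round in which $h$ turns blue; by the lemma the blue rim vertices immediately before round $t_0$ are exactly the $\le t_0-1$ sources placed so far (all rim vertices when $t_0\ge 3$). Case $t_0\le 2$: then $h\in\{s_1,s_2\}$, a source slot is spent on the hub, and counting intervals as above gives $n\le(T-1)^2$; since $n\ge 5$ forces $T\ge 4$, this yields $n\le T^2-7$. Case $t_0\ge 3$ and $t_0<T$: from round $t_0$ on the rim runs $1$-burning, so each of the $\le t_0$ sources placed by round $t_0$ burns an interval of radius $T-t_0$ and each later source $s_j$ an interval of radius $T-j$, while the vertices ``wedged in'' during round $t_0$ lie inside the union of their two neighbouring source-intervals (on a cycle these are ordinary intervals); summing,
\[
n\ \le\ t_0\bigl(2(T-t_0)+1\bigr)+(T-t_0)^2\ =\ T^2-t_0(t_0-1)\ \le\ T^2-6 .
\]
Case $t_0=T$: the rim is covered only by the $\le T-1$ sources together with the single vertices sitting strictly between consecutive ones, so $n\le 2(T-1)$, and for $n\ge 5$ one checks $\tfrac{n+2}{2}\ge\sqrt{n+6}$. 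In every case $T\ge\lceil\sqrt{n+6}\,\rceil$, so $b_2(W_n)\ge T$, and combined with the upper bound $b_2(W_n)=T$.

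The delicate part is the lower bound, and within it the bookkeeping: proving that no rim vertex burns before round $t_0$ except sources, recognizing that the wedged vertices of round $t_0$ add nothing to the covered region (their balls nest inside the neighbouring source balls since on a cycle balls are intervals), and disposing of the boundary cases $t_0\in\{1,2\}$ and $t_0=T$, which are not what makes the bound tight — the generic $t_0=3$ is — but must still be ruled out. The upper-bound construction is routine except for the packing of the odd-length intervals around $C_n$ without a source being burned before it is scheduled.
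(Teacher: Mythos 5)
Your argument is correct in substance, and while the upper bound is essentially the paper's (the count $3(2T-5)+(T-3)^2=T^2-6\ge n$ is exactly the coverage computation used there), your lower bound is organized along a genuinely different route. The paper splits on whether the hub is a source: if so, the process reduces to $1$-burning of $C_n$ and the known value $b_1(C_n)=\lceil\sqrt{n}\,\rceil$ gives $1+\lceil\sqrt{n}\,\rceil$ rounds; if not, the hub is blue in round $3$ and a responsibility count (each source produces at most two new blue rim vertices per later round) gives $n\le m(2k-m)-6\le k^2-6$; the proof then ends by checking $\lceil\sqrt{n+6}\,\rceil\le 1+\lceil\sqrt{n}\,\rceil$ for $n\ge 5$. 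You instead parametrize by the round $t_0$ in which the hub turns blue, prove the clean lemma that no non-source rim vertex is blue before round $t_0$, and obtain the unified bound $n\le T^2-t_0(t_0-1)$ (worst at $t_0=3$), with the hub-as-source possibility absorbed as the case $t_0\le 2$ via $n\le (T-1)^2$. This buys you independence from the external $b_1(C_n)$ result and removes the final comparison of the two ceilings; the paper's split buys shorter bookkeeping, since your route must dispose of the boundary cases $t_0\le 2$ and $t_0=T$ explicitly.

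Two loose ends to tighten. In the case $t_0=T$, your claim of ``at most $T-1$ sources'' on the rim needs a sentence when the hub is \emph{not} a source: then at most one source can be placed in rounds $1,\dots,T-2$ (two distinct rim sources by round $T-2$ would turn the hub blue before round $T$), so there are at most three rim sources and $n\le 2(T-1)$ still holds once $T\ge 4$, which your counting forces. Also, in the lower bound you should let $T$ denote $\rd(s)$ for the optimal sequence rather than reusing $T=\lceil\sqrt{n+6}\,\rceil$; as written the opening sentence is circular, although the inequalities you derive are exactly the ones needed to conclude $\rd(s)\ge\lceil\sqrt{n+6}\,\rceil$. Finally, for the upper bound a complete proof should exhibit the source positions as the paper does; your packing concern about a source turning blue before its scheduled round is actually harmless, since the algorithm simply skips an already-blue source and coverage only improves.
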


\begin{proof}
Let $v^{*}$ be the central vertex of $W_n$ and label the vertices along the cycle consecutively as $1, 2, 3, \ldots, n$ where $n \ge 5$.
We consider  two possibilities for $2$-burning sequences for $W_n$: those for which $v^{*}$ is a source and those for which $v^{*}$ is not  a source.  Note that it is not useful to choose $v^{*}$ as a source in round 3 or later since it will be adjacent to the first two source vertices and will therefore turn blue at round 3.  Additionally, note that selecting $v^{*}$ as the source in round 2 is equivalent to selecting it as the source in round 1, so we consider the latter in Case 1.

\medskip

\noindent {Case 1:} The central vertex $v^{*}$ is the first source.  

Observe that  in this case, once the central vertex turns blue, each vertex in the cycle is adjacent to one blue vertex, namely $v^{*}$, so it will turn blue when it is chosen as a source or when it is adjacent to another blue vertex of the cycle.  This reduces the problem to finding  the $1$-burning  number for $C_n$, and adding one for the initial round of selecting $v^{*}$ as a source.  
          In \cite{BJR2016},  it is proven  that $b_{1}(C_{n})=\lceil \sqrt{n}  \ \rceil$, hence in Case  1, the minimum number of rounds for all vertices to turn blue is  $ 1 + \lceil \sqrt{n}  \ \rceil$.
  
  \medskip

\noindent {Case 2:} 
The central vertex $v^{*}$ is not a source vertex. 

Let $s = (s_1,s_2,\dots,s_m)$ be a $2$-burning sequence  for which $s_i \neq v^{\ast}$ for $1 \le i \le m$.  Let $k = \rd(s)$, so $3 \le m \le k$.  
Apply Algorithm $2$-burning with input $W_n$ and sequence $s$.    In round $3$ the central vertex  $v^*$ turns blue.  After that, each  vertex with a blue neighbor on the cycle  becomes blue in the next round. In round $4$, the neighbors of $s_1$  on the cycle turn blue (if they are not already blue) and in round $5$ their neighbors on the cycle turn blue, and this continues propagating outward along the cycle in both directions from $s_1$.  Thus at each round, starting at round $4$, vertex $s_1$ is responsible for at most two new blue vertices, so after $k$ rounds,  source $s_1$ turns at most $2(k-3)$ vertices of the cycle blue.  The same is true for sources $s_2$ and $s_3$.  For $j \ge 4$, source $s_j$ is responsible for turning at most $2$ of the cycle vertices blue in each round from $j+1$ to $k$, so it turns at most $2(k-j)$ vertices of the cycle blue in total.    Let $B$ be the set of blue vertices in the cycle after $k$ rounds, thus $|B| \le m+3 \cdot 2 (k-3)+\sum_{j=4}^{m} 2(k-j)$.

    We simplify this sum to 
     \begin{eqnarray*}
|B| &\le&m+6k-18 +2\sum_{j=4}^{m} k - 2\sum_{j=4}^{m} j  \\
 &=& m + 6k - 18 + 2k(m-3) - (m+4)(m-3)\\
 &=& 2km - m^2 -6 = m(2k-m) -6.
\end{eqnarray*}
There are $n$ vertices on the cycle, and all are blue after round $k$, thus $n \le m(2k-m) -6$.  The product $m(2k-m)$ is maximized when $k=m$, so $n  \le k^2-6$
        or equivalently $k \ge \sqrt{n+6}$.  We know $k$ is an integer, so in fact, $k \ge \lceil\sqrt{n+6} \  \rceil$.
           Hence when $v^{*}$ is not a source vertex, every $2$-burning sequence for $W_n$ requires at least  $ \lceil \sqrt{n+6} \  \rceil$ rounds  and at least $m$ source vertices where $m(2k-m) \ge n+6$.
        \smallskip
    
Next  we show that the quantity $ \lceil \sqrt{n+6} \  \rceil$  is also sufficient.  Let $k = \lceil \sqrt{n+6} \  \rceil$ so $k^2 \ge n+6$.  Choose $m$ so that  $2km - m^2 \ge n+6$, but $2kr - r^2 < n+6$ for $r < m$.   That is, $m$ is the minimum integer for which $m(2k - m) \ge n+6$.  Note that $m$ is well-defined since for $r=0$ we know $0 <  n+6$ and for $r=k$ we know $k^2 \ge n+6$.  
We construct a $2$-burning sequence $s = (s_1,s_2,\dots,s_m)$ and show that $\rd(s) \le k$.  This is illustrated in Figure~\ref{fig-wheel}
for $n=30$ (where $k=6$ and $m=6$)  and for  $n=26$ (where $k=6$ and $m=4$).
Select  the source vertices as follows.  Let $s_1 = 1$, \  $s_2 = s_1 + 2(k-3) + 1$,  and  $s_3 = s_2 + 2(k-3) + 1$.  For $ 4 \le j \le k$, let $s_j = s_{j-1} + (k-j+1) + (k-j) + 1$.   One can check that $m \le 3$ only when $n \le 9$, and in each of those cases our sequence $s$ is a $2$-burning sequence  for $W_n$.  Thus we may assume $m \ge 4$.

 By the end of round $3$, the source vertices $s_1, s_2, s_3$ are blue as is the central vertex $v^{\ast}$.  Once  $v^{\ast}$ is blue,  each uncolored vertex with a blue neighbor on the cycle becomes blue in the next round.  
First consider vertices $v$ with    $s_1 < v < s_2$.  By construction, there are $2(k-3)$ such vertices, so each is distance at most $k-3$ from  one of $s_1, s_2$ along the cycle.    Since  $s_1, s_2$ and $v^{\ast}$ are all blue by round 3, and there are $k$ rounds total, vertex $v$ will be blue after round $k$.  The same is true for $s_2 < v < s_3$.

Next consider vertices $v$ with $s_{j-1} < v < s_j$ for some $j$ with $4 \le j \le m$.   By construction, $s_j - s_{j-1} = 1 + (k-j+1) + (k-j)$, so there are  there are $(k-j) + (k-(j-1))$  vertices strictly between $s_j$ and $s_{j-1}$ along the cycle.  The $k-j$ vertices closest to $s_j$ all turn blue by round $k$ since $s_j$ turns blue at round $j$ and there are $k-j$ rounds remaining.  Similarly, the  $k-(j-1)$ vertices closest to $s_{j-1}$ all turn blue by round $k$.  Thus again, vertex $v$ will be blue after round $k$.  

Finally, we consider the vertices $v$  with $s_m < v < n$.  As before, the $k-3$ vertices closest to $s_1$ along the cycle are all blue after round $k$, so it suffices to consider $v$ with $s_m < v \le n-(k-3)$.
 Using our recursive formulas above, we can derive explicit formulas for the  source vertices.  In particular,  $s_1 = 1, $\ $  s_2 =  2 +  2(k-3),$\  $s_3 =  3+ 4(k-3),$ and 
  \begin{eqnarray*}
s_m &=&  m + 3(k-3) + (k-m) +  2 \sum_{i=3}^{m-1}(k-i)  \\
 &=& 4k-9 +  2 \sum_{i=3}^{m-1} k -  2 \sum_{i=3}^{m-1}i  \\
 &=&4k-9 + 2k(m-3)- (m+2)(m-3)\\
 &=& 2km - m^2 + m  -2k - 3.
\end{eqnarray*}

Hence,  $s_m + (k-m) = 2km - m^2  - k - 3  \ge n+6 - k - 3 = n - (k-3)$.
 Since $s_m$ turns blue in round $m$ and there are $k-m$ rounds remaining,   the vertices $v$ with $s_m < v \le n-(k-3)$
 will all be blue after round $k$.  
    Thus $s$ is a $2$-burning sequence for $W_n$ and $\rd(s) \le k =   \lceil \sqrt{n+6} \  \rceil$.  Hence we have shown that 
when $v^{\ast}$ is not a source, the minimum number of rounds for all vertices of $W_n$ to turn blue is   $\lceil \sqrt{n+6} \ \rceil $.

Combining the results of these cases, we conclude that  $b_2(W_n) = \min \{ 1 + \lceil \sqrt{n} \ \rceil, \lceil \sqrt{n+6} \ \rceil \}$.  By inspection, $b_{2}(W_{4})=3$, and it is not hard to show that $\lceil \sqrt{n+6} \  \rceil  \le 1 + \lceil \sqrt{n} \  \rceil  $  for $n \geq 5$,  completing the proof.
\end{proof}

In our proof of Theorem~\ref{wheel-thm}
we found the minimum number of sources needed for an optimal $2$-burning sequence for $W_n$. We record this in the following corollary.

\begin{corollary}
If $n\ge 5$ and $k = \lceil \sqrt{n+6} \ \rceil$ then $t_2(W_n)$ is the minimum integer $m \ge 3$  for which $m(2k-m) \ge n+6$.
\label{wheel-cor}
\end{corollary}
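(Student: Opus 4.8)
The plan is to extract the corollary from the case analysis already carried out in the proof of Theorem~\ref{wheel-thm}, using that $t_2(W_n)$ is, by definition, the smallest $\len(s)$ among $2$-burning sequences $s$ for $W_n$ with $\rd(s)=b_2(W_n)$, together with the facts established there that $b_2(W_n)=k:=\lceil\sqrt{n+6}\,\rceil$ and that the map $\ell\mapsto\ell(2k-\ell)$ is strictly increasing on $\{0,1,\dots,k\}$. Write $m$ for the minimum integer $m\ge 3$ with $m(2k-m)\ge n+6$.

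For the inequality $t_2(W_n)\le m$ I would invoke the sequence constructed in Case~2 of that proof: it is a $2$-burning sequence of length $m$ whose $\rd$-value is at most $k$ by construction, hence exactly $k=b_2(W_n)$, so it is optimal and $t_2(W_n)\le m$.

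For $t_2(W_n)\ge m$ I would take an arbitrary optimal $2$-burning sequence $s$, so $\rd(s)=k$, and split on whether $v^{*}\in s$. If $v^{*}\notin s$, the counting inequality from Case~2 gives $\len(s)(2k-\len(s))\ge n+6$; combined with $3\le\len(s)\le\rd(s)=k$ (the bounds noted in Case~2) and the monotonicity of $\ell\mapsto\ell(2k-\ell)$, the minimality of $m$ forces $\len(s)\ge m$. If $v^{*}\in s$, then as in Case~1 we may assume $v^{*}=s_1$, and once $v^{*}$ is blue a cycle vertex turns blue precisely when it gains a blue cycle-neighbour; hence removing $v^{*}$ from $s$ leaves a $1$-burning sequence for $C_n$ that finishes within $k-1$ rounds. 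The minimum length of such a sequence is the least $p\ge 1$ with $p(2(k-1)-p)\ge n$; calling it $p^{*}$ and setting $q:=1+p^{*}$, the identity $q(2k-q)=(2k-1)+p^{*}(2(k-1)-p^{*})\ge n+2k-1$ gives $q(2k-q)\ge n+6$ since $k\ge 4$ for $n\ge 5$, so by minimality of $m$ we get $m\le q\le\len(s)$. In either case $\len(s)\ge m$.

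The subcase $v^{*}\in s$ is the step I expect to be the main obstacle: one must see that a central first source merely delays the cycle dynamics by one round, recognise that the optimal number of cycle sources for $1$-burning $C_n$ in a fixed number of rounds obeys the same $p(2r-p)$ packing law, and then verify the short inequality $1+p^{*}\ge m$ --- that a sequence using $v^{*}$ can never beat the Case~2 construction --- with the small-$n$ arithmetic checked separately. Everything else is a direct citation of the estimates and the explicit construction in the proof of Theorem~\ref{wheel-thm}.
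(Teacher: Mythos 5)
Your route is essentially the paper's: the corollary is stated there with no separate proof, as a record of what Case~2 of the proof of Theorem~\ref{wheel-thm} establishes, and both your upper bound (the explicit Case~2 construction of length $m$) and your lower bound when $v^{\ast}\notin s$ (the counting inequality $\len(s)\bigl(2k-\len(s)\bigr)\ge n+6$) are exactly that extraction. What you add is the subcase $v^{\ast}\in s$, which the paper skips entirely; this is a genuine improvement, because for those $n$ with $\lceil\sqrt{n+6}\,\rceil = 1+\lceil\sqrt{n}\,\rceil$ (e.g.\ $n=9$ or $n=20$) optimal sequences that do use $v^{\ast}$ exist, so one really must check they are never shorter than $m$. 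Your reduction to $1$-burning $C_n$ within $k-1$ rounds, the packing bound $p\bigl(2(k-1)-p\bigr)\ge n$, and the identity $q(2k-q)=(2k-1)+p^{\ast}\bigl(2(k-1)-p^{\ast}\bigr)$ with $2k-1\ge 7$ are all correct.

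There are, however, two genuine gaps, both at small $n$. First, in the $v^{\ast}\notin s$ branch you import the bound $3\le\len(s)$ from Case~2; the paper asserts ``$3\le m\le k$'' there without justification, and it is false in general: in $W_6$ the sequence $(1,4)$ (two antipodal cycle vertices) turns every vertex blue in $4=b_2(W_6)$ rounds, and in $W_5$ the sequence $(1,3)$ does the same, so $t_2(W_5)=t_2(W_6)=2$, while the corollary's formula with the floor $m\ge 3$ gives $3$. Thus for $n\in\{5,6\}$ the statement itself fails, and no argument can close that hole. Second, your step ``by minimality of $m$ we get $m\le q$'' requires $q\ge 3$, i.e.\ $p^{\ast}\ge 2$; for $n=5$ one has $p^{\ast}=1$ (the sequence $(v^{\ast},1)$ burns $W_5$ in $4$ rounds), so this step breaks exactly where the statement does. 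For $n\ge 7$ both issues are repairable by a short count showing two sources can never finish within $k=\lceil\sqrt{n+6}\,\rceil$ rounds: two cycle sources color at most $4k-10<n$ cycle vertices by round $k$, and $v^{\ast}$ plus one cycle source at most $2k-3<n$; with that added, your argument goes through and in fact supplies the case the paper omits.
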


  \begin{figure}
  \[ \includegraphics[width=\textwidth]{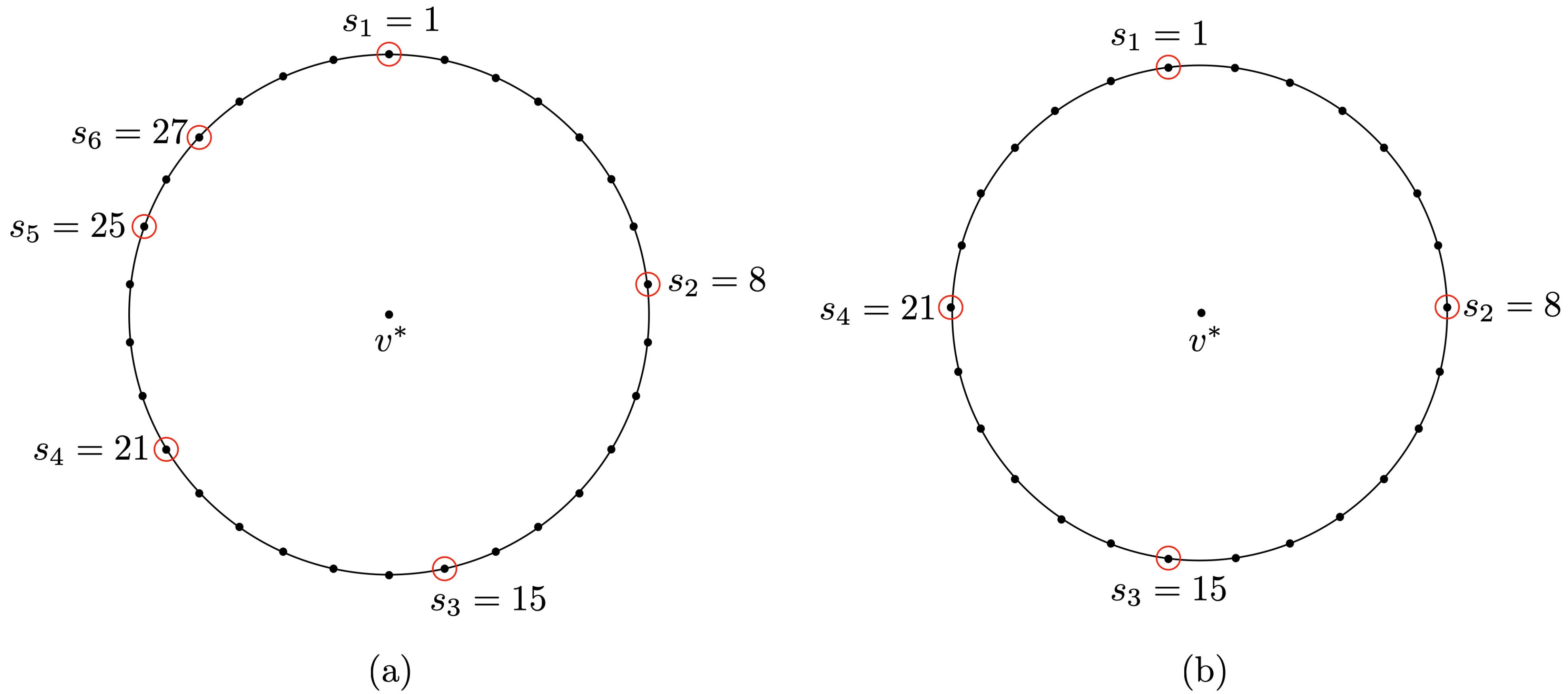} \]
   \caption{The wheels $W_{30}$  and $W_{26}$ where the edges incident to $v^{*}$ are omitted and only the sources in our  optimal $2$-burning sequences  are labeled.}
   \label{fig-wheel}
   \end{figure}

The wheel graphs  $W_{30}$ and  $W_{26}$ and optimal $2$-burning sequences for them are illustrated in  Figure~\ref{fig-wheel}.  For $W_{30}$
the optimal $2$-burning sequence constructed in the proof of Theorem~\ref{wheel-thm} is  $s=(1, 8, 15, 21, 25, 27)$ where $k= b_2(W_{30})=6$ and $m =t_2(W_{30} )= 6$.  For $W_{26}$
the optimal $2$-burning sequence constructed in the proof of Theorem~\ref{wheel-thm} is  $s=(1, 8, 15, 21)$ where $k= b_2(W_{26})=6$ and $m =t_2(W_{26}) = 4$.

  The next theorem shows that there exist wheel graphs $G$ for which  the $2$-burning number is arbitrarily larger than the length of an optimal $2$-burning sequence.
 
  \begin{theorem}\label{thm:wheel_R}
  For any  integer $r$, there exists an integer $n$ for which $b_2(W_n) - t_2(W_n) \ge r$.
  \end{theorem}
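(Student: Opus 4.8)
The plan is to use the exact formulas established above: by Theorem~\ref{wheel-thm}, $b_2(W_n) = \lceil\sqrt{n+6}\,\rceil$ for $n \ge 5$, and by Corollary~\ref{wheel-cor}, writing $k = \lceil\sqrt{n+6}\,\rceil$, the value $t_2(W_n)$ is the least integer $m \ge 3$ with $m(2k-m) \ge n+6$. The key algebraic observation is the identity $m(2k-m) = k^2 - (k-m)^2$, which rewrites the inequality $m(2k-m) \ge n+6$ as $(k-m)^2 \le k^2 - (n+6)$. Thus $b_2(W_n) - t_2(W_n)$ is large precisely when $n+6$ is only slightly larger than $(k-1)^2$: then $k^2 - (n+6)$ is close to $2k$, so $k-m$ can be as large as about $\sqrt{2k}$ while $m$ still satisfies the required inequality.

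Concretely, given $r$ I would choose an integer $k$ large enough that $2k - 2 \ge r^2$ and $k \ge 7$, and set $n = k^2 - 2k - 4$, so that $n + 6 = (k-1)^2 + 1$. Since $(k-1)^2 < n+6 \le k^2$, we get $\lceil\sqrt{n+6}\,\rceil = k$, hence $b_2(W_n) = k$; also $n \ge 5$, so Theorem~\ref{wheel-thm} and Corollary~\ref{wheel-cor} apply. Next I would compute $t_2(W_n)$. The map $m \mapsto m(2k-m)$ is strictly increasing on $\{0,1,\dots,k\}$, so the least integer $m$ with $m(2k-m) \ge n+6 = k^2 - (2k-2)$ is, by the identity above, the least $m$ with $(k-m)^2 \le 2k-2$, namely $m = k - \lfloor\sqrt{2k-2}\,\rfloor$. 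For $k \ge 7$ one has $(k-3)^2 \ge 2k-2$, so $k - \lfloor\sqrt{2k-2}\,\rfloor \ge k - \sqrt{2k-2} \ge 3$; hence the constraint $m \ge 3$ in Corollary~\ref{wheel-cor} is not binding, and $t_2(W_n) = k - \lfloor\sqrt{2k-2}\,\rfloor$. Therefore $b_2(W_n) - t_2(W_n) = \lfloor\sqrt{2k-2}\,\rfloor \ge r$, as $2k - 2 \ge r^2$.

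This is essentially a direct computation once the right family of $n$ is identified, so I do not expect a serious obstacle. The one point needing a little care is confirming that the unconstrained minimizer $k - \lfloor\sqrt{2k-2}\,\rfloor$ is already at least $3$, so that the ``$m \ge 3$'' clause of Corollary~\ref{wheel-cor} does not alter the answer; the remaining verifications ($n \ge 5$, $\lceil\sqrt{n+6}\,\rceil = k$, and strict monotonicity of $m(2k-m)$ on $\{0,\dots,k\}$) are routine.
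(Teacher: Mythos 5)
Your proof is correct and follows essentially the same route as the paper: you pick the identical family $n=(k-1)^2-5$ (written as $k^2-2k-4$) with $2(k-1)\ge r^2$, apply Theorem~\ref{wheel-thm} and Corollary~\ref{wheel-cor}, and bound $k-m$. The only cosmetic difference is that you compute $t_2(W_n)$ exactly as $k-\lfloor\sqrt{2k-2}\rfloor$ via the identity $m(2k-m)=k^2-(k-m)^2$, whereas the paper merely exhibits the witness $j=k-r$ with $j(2k-j)\ge n+6$ to conclude $m\le k-r$.
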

  
 \begin{proof}
  Fix an integer $r$ and choose an integer $k \ge 5$ such that $2(k-1) > r^2$.   Let  $n = (k-1)^2 -5$, so $n+ 6 = (k-1)^2 + 1 = k^2 - 2(k-1)$, and thus $\lceil \sqrt{n+6}\ \rceil = k$.
  By Theorem~\ref{wheel-thm}  we know that  $ b_2(W_{n}) = k$ and  by  Corollary~\ref{wheel-cor}  we know that $t_2(W_n)$ is the smallest integer $m \ge 3$ for which $m(2k-m) \ge n+6. $ Observe that for $j = k-r$ we have
  $$j(2k-j) = (k-r)(k+r) = k^2 - r^2  > k^2 - 2(k-1) = n+6.$$
  Hence $m \le k-r$ and $b_2(W_n) - t_2(W_n) = k- m \ge r$ as desired.  
    \end{proof}

\section{Cartesian products}\label{sec:cart}

\label{cartesian-sec}
Motivated by the interesting relationships that often emerge between graph parameters of input graphs versus their product, we study the $2$-burning number and the $2$-burning source number of Cartesian products.
The \emph{Cartesian product} of graphs $G$ and $H$ is the graph denoted $G \x H$ that has vertex set $\{(u,v): u \in V(G), v \in V(H) \}$ and  $(u_1,v_1) (u_2,v_2) \in E(G \x H) $ if and only if one of the following hold:  (i)  $u_1=u_2$ and $v_1v_2 \in E(H)$ or  (ii)  $v_1=v_2$ and $u_1u_2 \in E(G)$. 

 Recall that Observation~\ref{obs:deg1} provided a lower bound for the $2$-burning number of a graph based on the number of leaves; namely, if graph $G$ has $k$ leaves, then $b_2(G) \geq k$.  Cartesian products of connected graphs have no leaves, so we begin by determining $b_2(K_m \x K_n)$, and use this to  provide a lower bound for the $2$-burning number of the Cartesian product of any pair of connected graphs on $m$ and $n$ vertices.

\begin{theorem}\label{thm:KmKn} If $m \geq 5$ and $n = 3$, or $m \geq n \geq 4$, then $t_2(K_m \x K_n) = 2$ and  $b_2(K_m \x K_n) = 5$.\end{theorem}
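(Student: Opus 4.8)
The plan is to establish the three facts $t_2(K_m\x K_n)\ge 2$, $b_2(K_m\x K_n)\le 5$ (witnessed by a $2$-burning sequence of length $2$), and $b_2(K_m\x K_n)\ge 5$; together these give $b_2=5$ and $t_2=2$. Throughout I identify $V(K_m\x K_n)$ with $[m]\times[n]$ (where $[k]=\{1,\dots,k\}$), so that for each fixed $i$ the ``row'' $\{(i,j):j\in[n]\}$ is a clique on $n$ vertices, for each fixed $j$ the ``column'' $\{(i,j):i\in[m]\}$ is a clique on $m$ vertices, and two distinct vertices are adjacent precisely when they agree in exactly one coordinate; note also that in both cases of the hypothesis $m\ge n\ge 3$. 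The bound $t_2\ge 2$ is immediate: with a single source no vertex ever acquires two blue neighbours, so a length-one sequence cannot burn a connected graph on at least two vertices, and hence every $2$-burning sequence (in particular every optimal one) has length at least $2$.

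For the upper bound I would run Algorithm $2$-burning on the sequence $s=\bigl((1,1),(2,2)\bigr)$ of length $2$. After round $2$ the blue set is $\{(1,1),(2,2)\}$; the only common neighbours of these two vertices are $(1,2)$ and $(2,1)$, so after round $3$ the blue set is exactly the corner $\{1,2\}\times\{1,2\}$. In round $4$ each not-yet-blue vertex of rows $1,2$ sees the two blue vertices of its own row, and each not-yet-blue vertex of columns $1,2$ sees the two blue vertices of its own column, so after round $4$ all of rows $1,2$ and columns $1,2$ are blue; in round $5$ every remaining vertex $(i,j)$ with $i,j\ge 3$ sees the two (now blue) vertices $(i,1),(i,2)$ of its own row and turns blue. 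Hence $\rd(s)\le 5$, so $b_2(K_m\x K_n)\le 5$, and once the lower bound below is established, $s$ is an optimal $2$-burning sequence of length $2$, so $t_2(K_m\x K_n)\le 2$.

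The substantive part is the lower bound $b_2(K_m\x K_n)\ge 5$, which I would prove by contradiction. Suppose a $2$-burning sequence $s$ has $\rd(s)=4$. Then only the sources $s_1,s_2,s_3,s_4$ are processed (in rounds $1,2,3,4$ respectively, and some of $s_3,s_4$ may fail to exist), the blue set is $\{s_1\}$ after round $1$ and $\{s_1,s_2\}$ after round $2$ — so no propagation occurs before round $3$ — and I may assume $s_1\ne s_2$ since otherwise every later blue set is only smaller. Now split into cases according to how $s_1$ and $s_2$ are situated. If $s_1,s_2$ lie in a common row $R$, then (the common neighbours of two vertices of $R$ all lie in $R$) the blue set after round $3$ lies in $R\cup\{s_3\}$; a vertex $v\notin R$ has a unique possible blue neighbour in $R$, namely the vertex of $R$ in $v$'s column, so $v$ can turn blue in round $4$ only if it is also adjacent to $s_3$, i.e.\ lies in the row or column of $s_3$. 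Hence the final blue set $B$ satisfies $B\subseteq R\cup(\text{row of }s_3)\cup(\text{column of }s_3)\cup\{s_4\}$, so $mn=|B|\le 2n+m-1$, i.e.\ $m(n-1)\le 2n-1$; but $n\ge 3$ forces $m(n-1)\ge 2m$, whence $2m\le 2n-1<2n$ and $m<n$, contradicting $m\ge n$. The common-column case is symmetric and gives $mn\le 2m+n-1$, so $3(m-1)\le n(m-1)\le 2m-1$ and $m\le 2$, contradicting $m\ge 3$. Finally, suppose $s_1=(i_1,j_1)$ and $s_2=(i_2,j_2)$ differ in both coordinates; then the only common neighbours of $s_1,s_2$ are $(i_1,j_2)$ and $(i_2,j_1)$, so the blue set after round $3$ is $\{(i_1,j_1),(i_2,j_2),(i_1,j_2),(i_2,j_1),s_3\}$, and any vertex outside rows $i_1,i_2$ and columns $j_1,j_2$ is non-adjacent to all four corner vertices, hence has at most the single blue neighbour $s_3$ after round $3$ and cannot turn blue in round $4$. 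Thus $B$ lies in the union of rows $i_1,i_2$, columns $j_1,j_2$, and $\{s_3,s_4\}$; counting the $2n+2m-4$ vertices in those rows and columns together with $s_3$ and $s_4$ gives $mn=|B|\le 2m+2n-2$. For $n=3$ this forces $m\le 4$, contradicting $m\ge 5$, and for $n\ge 4$ we have $mn\ge 4m$ while $2m+2n-2\le 4m-2$ (since $n\le m$), again a contradiction. Hence $b_2(K_m\x K_n)\ge 5$, and combining with the upper bound completes the argument.

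I expect the main obstacle to be the different-rows-different-columns case of the lower bound: one must be careful that the source $s_3$, introduced already in round $3$ and so given a round to propagate, still cannot carry the blue set beyond the union of two rows, two columns, and two extra vertices, and one must track the overlaps among those rows and columns to get the clean count $|B|\le 2m+2n-2$. The remaining case analysis and the resulting inequalities are routine.
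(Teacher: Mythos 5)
Your proposal is correct and follows essentially the same route as the paper: the identical two-source sequence $\bigl((1,1),(2,2)\bigr)$ gives the upper bound and $t_2\le 2$, and the lower bound uses the same trichotomy on whether the first two sources share a row, share a column, or differ in both coordinates. The only cosmetic difference is that you reach the contradiction by counting the largest possible blue set after four rounds (a union of a few rows and columns plus the extra sources, compared against $mn$), whereas the paper exhibits a specific vertex that remains uncolored at the end of round $4$ in each case.
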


\begin{proof} Let $V(K_m) = \{u_1,u_2,\dots,u_m\}$ and $V(K_n) = \{v_1,v_2,\dots,v_n\}$.  Partition $K_m \x K_n$ into $m$ copies of $K_n$, labeled $H_1,H_2,\dots,H_m$ where $u_i$ is the first coordinate of each vertex in $H_i$.  
For the lower bound on $b_2(K_m \x K_n)$, without loss of generality, suppose the first source vertex is $(u_1,v_1)$ in $H_1$.  We consider two cases.\smallskip

\noindent Case 1:  The second source vertex is in $H_1$. 

At the end of round $3$, every vertex in $H_1$ has turned blue.  Without loss of generality, the third source vertex is in $H_2$.  So at the end of round $3$, there are at least three uncolored vertices in $H_2$ and every vertex in $H_3$ is uncolored.  Let $(u_2,v_j)$ and $(u_2,v_k)$ be distinct vertices of $H_2$ that are uncolored at the end of round $3$, where $1 \leq j,k \leq n$ and $j \neq k$.  Then at most one of $(u_3,v_j),(u_3,v_k)$ in $H_3$ turns blue during round $4$, leaving at least one uncolored at the end of round $4$.\smallskip

\noindent Case 2: The second source vertex is not in $H_1$.

Without loss of generality, assume the second source is $(u_2,v_j)$ in $H_2$.  If $j=1$ then during  round $3$, exactly one non-source vertex turns blue in  $H_i$ for $3 \le i \le m$, namely  $(u_i,v_1)$.  Without loss of generality, assume $(u_1,v_2)$ is the third source vertex. Then during round $4$, exactly one non-source vertex   turns blue in each  $H_i$ for $3 \le i \le m$, namely  $(u_i,v_2)$.  Since $|V(H_i)| \geq 3$ for $i: 1 \leq i \leq m$ and at most one vertex from $H_1 \cup H_2$ is a source vertex in round $4$, there is at least one uncolored vertex amongst the vertices of $H_1 \cup H_2$ at the end of round $4$. 
    
If $j \neq 1$, without loss of generality we may assume $j=2$, so the first two sources are $(u_1,v_1)$ and $(u_2,v_2)$.  Then the only non-source vertices to turn blue in round $3$ are $(u_1,v_2)$ and $(u_2,v_1)$.  During round $4$, the remaining vertices of $H_1 \cup H_2$ turn blue; along with at most two non-source vertices in each of $H_3,H_4$.  At the end of round $4$, there are at most two source vertices in $H_3 \cup H_4$.  If $m \geq n \geq 4$, this leaves at least two vertices in $H_3 \cup H_4$ uncolored by the end of round 4.  If $m \geq 5$ and $n=3$, observe that during round $4$, exactly two non-source vertices in each of $H_3,H_4,H_5$ turn blue.  Then at the end of round $4$, there are at most two source vertices in $H_3 \cup H_4 \cup H_5$, leaving at least one uncolored vertex in $H_3 \cup H_4 \cup H_5$.\medskip

The above two cases show that $b_2(K_m \x K_n) \geq 5$.  To see that $b_2(K_m \x K_n) = 5$, let $s = ((u_1,v_1),(u_2,v_2))$ and observe that $s$ is a $2$-burning sequence of $K_m \x K_n$ that results in every vertex blue by the end of round $5$. It also shows that $t_2(K_m \x K_n) \leq 2$.  Any graph with two  or more vertices requires at least two sources, so $t_2(K_m \x K_n) = 2$.\end{proof} 

If $G$ and $H$ are graphs with $m =|V(G)|$ and $n = |V(H)|$ then $G \x H$ is a spanning subgraph of $K_m \x K_n$.  We combine Lemma~\ref{sameverticessublem} and Theorem~\ref{thm:KmKn} to conclude the following.

\begin{corollary}\label{cor:low} Let $G$ and $H$ be graphs on $m$ and $n$ vertices, respectively, where $m \geq 5$ and $n = 3$, or $m \geq n \geq 4$.  Then $t_2(G \x H) \geq 2$ and $b_2(G \x H) \geq 5.$ \end{corollary}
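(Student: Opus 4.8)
The plan is to derive Corollary~\ref{cor:low} as an immediate consequence of Lemma~\ref{sameverticessublem} and Theorem~\ref{thm:KmKn}, so the proof is essentially a one-line observation once the right spanning subgraph relationship is pointed out. First I would note that if $G$ has $m$ vertices and $H$ has $n$ vertices, then the vertex set of $G \x H$ is exactly $\{(u,v): u \in V(G), v \in V(H)\}$, which is the same as the vertex set of $K_m \x K_n$ after identifying $V(G)$ with $V(K_m)$ and $V(H)$ with $V(K_n)$. So both products are graphs on the same $mn$ vertices.

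Next I would check that every edge of $G \x H$ is also an edge of $K_m \x K_n$: an edge of $G \x H$ either joins $(u_1,v)$ to $(u_2,v)$ with $u_1u_2 \in E(G)$, or joins $(u,v_1)$ to $(u,v_2)$ with $v_1v_2 \in E(H)$. In the first case $u_1u_2 \in E(K_m)$ (since $K_m$ is complete) so the edge lies in $K_m \x K_n$; the second case is symmetric. Hence $G \x H$ is a spanning subgraph of $K_m \x K_n$. One should also note that $G \x H$ is connected whenever $G$ and $H$ are connected, so Lemma~\ref{sameverticessublem} applies.

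Then I would invoke Lemma~\ref{sameverticessublem} with the roles $H \leftarrow G \x H$ and $G \leftarrow K_m \x K_n$ to get $b_2(K_m \x K_n) \le b_2(G \x H)$ and $t_2(K_m \x K_n) \le t_2(G \x H)$. Since the hypothesis ``$m \geq 5$ and $n = 3$, or $m \geq n \geq 4$'' is precisely the hypothesis of Theorem~\ref{thm:KmKn}, that theorem gives $b_2(K_m \x K_n) = 5$ and $t_2(K_m \x K_n) = 2$, and combining yields $b_2(G \x H) \ge 5$ and $t_2(G \x H) \ge 2$.

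There is essentially no obstacle here; the only thing to be careful about is the direction of the inequality in Lemma~\ref{sameverticessublem} (the parameter is monotone decreasing when adding edges, so the \emph{supergraph} $K_m \x K_n$ has the \emph{smaller} value, which is why it serves as a lower bound for $G \x H$), and the minor point that $K_m \x K_n$ genuinely has $mn$ vertices rather than fewer — this holds because $G \x H$ and $K_m \x K_n$ share the same vertex set by construction. I would write this all up in three or four sentences.

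\begin{proof}
Identify $V(G)$ with $V(K_m) = \{u_1,\dots,u_m\}$ and $V(H)$ with $V(K_n) = \{v_1,\dots,v_n\}$.  Then $G \x H$ and $K_m \x K_n$ have the same vertex set $\{(u_i,v_j) : 1 \le i \le m,\ 1 \le j \le n\}$.  Moreover, every edge of $G \x H$ joins two vertices $(u_i,v_j)$ and $(u_k,v_\ell)$ with either $i=k$ and $v_jv_\ell \in E(H) \subseteq E(K_n)$, or $j=\ell$ and $u_iu_k \in E(G) \subseteq E(K_m)$; in either case this pair is an edge of $K_m \x K_n$.  Hence $G \x H$ is a spanning subgraph of $K_m \x K_n$, and both are connected since $G$ and $H$ are.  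By Lemma~\ref{sameverticessublem}, $b_2(K_m \x K_n) \le b_2(G \x H)$ and $t_2(K_m \x K_n) \le t_2(G \x H)$.  The hypotheses on $m$ and $n$ are exactly those of Theorem~\ref{thm:KmKn}, which gives $b_2(K_m \x K_n) = 5$ and $t_2(K_m \x K_n) = 2$.  Therefore $b_2(G \x H) \ge 5$ and $t_2(G \x H) \ge 2$.
\end{proof}
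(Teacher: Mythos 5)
Your proof is correct and follows exactly the paper's argument: the paper likewise observes that $G \x H$ is a spanning subgraph of $K_m \x K_n$ and combines Lemma~\ref{sameverticessublem} with Theorem~\ref{thm:KmKn}. Your write-up just spells out the spanning-subgraph verification and the connectivity hypothesis in more detail than the paper does.
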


In the next theorem we show that $b_2(G)$ is an additional lower bound for $b_2(G \x H)$.

\begin{theorem}\label{thm:lower} Let $G$ and $H$ be connected graphs on $m$ and $n$ vertices, respectively, where $m \geq 5$ and $n=3$, or $m \geq n \geq 4$.  Then $b_2(G \x H) \geq \max \{5,b_2(G),b_2(H)\}$. \end{theorem}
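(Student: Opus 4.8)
By Corollary~\ref{cor:low} we already have $b_2(G \x H) \ge 5$, so it remains to show $b_2(G \x H) \ge b_2(G)$; the bound $b_2(G \x H) \ge b_2(H)$ then follows by interchanging the roles of $G$ and $H$. The plan is to start from an optimal $2$-burning sequence $s = \big((u_1,w_1),(u_2,w_2),\ldots,(u_p,w_p)\big)$ for $G \x H$, set $k = \rd(s) = b_2(G \x H)$, and then show that the sequence of first coordinates $s' = (u_1,u_2,\ldots,u_p)$ is a $2$-burning sequence for $G$ with $\rd(s') \le k$; this immediately gives $b_2(G) \le k$. Repetitions in $s'$ cause no difficulty, since when the source selected in a round is already blue, Algorithm $2$-burning does nothing in step (i).

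To prove $\rd(s') \le k$, I would run Algorithm $2$-burning on the pair $(G \x H, s)$ and on the pair $(G, s')$ side by side, and let $B_t \subseteq V(G \x H)$ and $A_t \subseteq V(G)$ denote the sets of blue vertices after round $t$ in the two runs. Let $\pi\colon V(G \x H) \to V(G)$ be the projection $(u,w) \mapsto u$. The key claim, proved by induction on $t$, is that $\pi(B_t) \subseteq A_t$ for all $t \ge 0$; since $\pi$ is surjective and $B_k = V(G \x H)$, this forces $A_k = V(G)$, so indeed $\rd(s') \le k$. The base case $t=0$ is trivial. For the inductive step, fix $u \in \pi(B_t)$ and choose $w$ with $(u,w) \in B_t$, and consider how $(u,w)$ came to be blue. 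If $(u,w) \in B_{t-1}$, then $u \in \pi(B_{t-1}) \subseteq A_{t-1} \subseteq A_t$. If $(u,w)$ is the source selected in round $t$, so $(u,w) = (u_t,w_t)$ with $t \le p$, then $u = u_t$ is the round-$t$ source of $s'$, hence $u \in A_t$ whether or not it was already blue. Otherwise $(u,w)$ acquired at least two blue neighbours in $B_{t-1}$; fix two of them. By the definition of the Cartesian product, each is either of the form $(u,w')$ with $w'w \in E(H)$ or of the form $(u',w)$ with $u'u \in E(G)$. If one of the two is of the first form, then $(u,w') \in B_{t-1}$ gives $u \in \pi(B_{t-1}) \subseteq A_{t-1} \subseteq A_t$. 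If both are of the second form, say $(u',w)$ and $(u'',w)$, then $u' \ne u''$, both lie in $\pi(B_{t-1}) \subseteq A_{t-1}$, and $u'u, u''u \in E(G)$, so $u$ has two distinct blue neighbours in $A_{t-1}$ and hence $u \in A_t$. In every case $u \in A_t$, completing the induction.

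Consequently $s'$ burns all of $G$ within $k$ rounds, so $b_2(G) \le k = b_2(G \x H)$, and symmetrically $b_2(H) \le b_2(G \x H)$; combined with Corollary~\ref{cor:low}, this yields $b_2(G \x H) \ge \max\{5, b_2(G), b_2(H)\}$. I expect the case analysis in the inductive step to be the only substantive part, and its crux is choosing the right meaning for ``$u$ is blue in $G$'': taking it to mean ``$(u,w)$ is blue in $G \x H$ for some $w$'' (rather than ``for all $w$'', which would be incompatible with the source rule) works precisely because any assistance that a product vertex $(u,w)$ receives from within the fibre $\{u\} \times V(H)$ already certifies $u \in \pi(B_{t-1})$, so that only propagation along edges inherited from $G$ has to be matched against the two-neighbour rule in $G$.
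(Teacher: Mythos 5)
Your proof is correct and follows essentially the same route as the paper: project the optimal $2$-burning sequence for $G \x H$ onto its first coordinates and show, by the same case analysis on whether a newly blue product vertex was helped from within its fibre or by two neighbours along $G$-edges, that the projected sequence burns $G$ in at most $b_2(G \x H)$ rounds. The only difference is cosmetic --- you phrase the key claim as an induction on the round number, while the paper argues by a minimal counterexample --- so the two arguments are the same in substance.
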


\begin{proof} Let $s = \big( (u_1,v_1),(u_2,v_2),\dots,(u_k,v_k)\big)$ be an optimal $2$-burning sequence for $G \x H$. We note that the first coordinates of vertices in sequence $s$ may not all be distinct: for $i \neq j$, it is possible that $u_i = u_j$.  Similarly, the second coordinates of vertices in $s$ may not all be distinct.

For graph $G$, let $s_G = (u_1,u_2,\dots,u_k)$.   To complete the proof, we must show that when Algorithm 2-burning is applied to graph $G$ and sequence $s_G$, it will terminate when all vertices of $G$ are blue.  The fact that the vertices in $s_G$ are not necessarily all distinct causes no problem with this implementation: if source vertex $u_j$ turns blue before round $j$, then no source vertex turns blue during round $j$.  

We claim that any vertex $u_p \in V(G)$ turns blue by round $r$ (via $s_G$) if there exists a vertex $(u_p,v_q) \in V(G \x H)$ that turns blue by round $r$ (via $s$).  By construction of $s_G$, the claim is true when $u_p$ is in $s_G$.    For a contradiction suppose the claim is false and let $\ell$ be minimum so that there exists  $(u_p,v_q) \in V(G \x H)$ that turns blue by round $\ell$ (via $s$) but for which $u_p$ is not blue (via $s_G$) by round $\ell$.  
 For $(u_p,v_q)$ to turn blue during round $\ell$, it must have at least two neighbors that are blue by the end of round $\ell-1$.  We consider two cases for neighbors of $(u_p,v_q)$.

For the first case, assume $(u_p,v_q)$ has a neighbor $(u_p,v_c)$ that is blue by round $\ell-1$.  By the minimality of $\ell$, since $(u_p,v_c)$   is blue by round $\ell-1$, vertex $u_p$ in $G$ is blue by round $\ell-1$, a contradiction.

For the second case, assume $(u_a,v_q)$ and $(u_b,v_q)$ are distinct neighbors of $(u_p,v_q)$ and are both blue by round $\ell-1$.  Then $u_a,u_b \in N_G(u_p)$ and since $\ell$ is minimum, $u_a,u_b$ are both blue by  round $\ell-1$.  Therefore, $u_p$ is blue by the end of round $\ell$, a contradiction.  This proves our claim.

As a consequence, if $s$ results in every vertex of $G \x H$ blue by round $b_2(G \x H)$, then $s_G$ results in every vertex of $G$ blue by round $b_2(G \x H)$.  It follows that $b_2(G) \leq b_2(G \x H)$. A similar argument shows $b_2(H) \leq b_2(G \x H)$ and  $5 \leq b_2(G \x H)$ follows from Corollary~\ref{cor:low}.\end{proof}

 The proof of Theorem~\ref{thm:lower} maps source vertices in $G \x H$ to source vertices in $G$ to show $b_2(G) \leq b_2(G \x H)$.  Unfortunately, this approach cannot be used to provide an analogous bound for $t_2(G \x H)$ based on $t_2(G)$.  To see this, consider $P_4 \x P_3$, as shown in Figure~\ref{fig:p4p3}.  The sequence $s = \{(1,1),(2,2),(3,3),(4,2),(4,1)\}$ results in every vertex blue by the end of round $5$ and Theorem~\ref{thm:lower} implies that $b_2(P_4 \x P_3)\ge 5$.  Thus $b_2(P_4 \x P_3)=5$.  However, mapping the first three source vertices of $s$ to $P_4$ results in sequence $s_{P_4} = \{1,2,3\}$, which is not a $2$-burning sequence for $P_4$ as non-source vertex $4$ has only one neighbor and cannot ever turn blue.  We do not know if $t_2(G)$ forms a lower bound for $t_2(G \x H)$ in general.

\begin{figure}[htbp] 
\[ \includegraphics[width=0.625\textwidth]{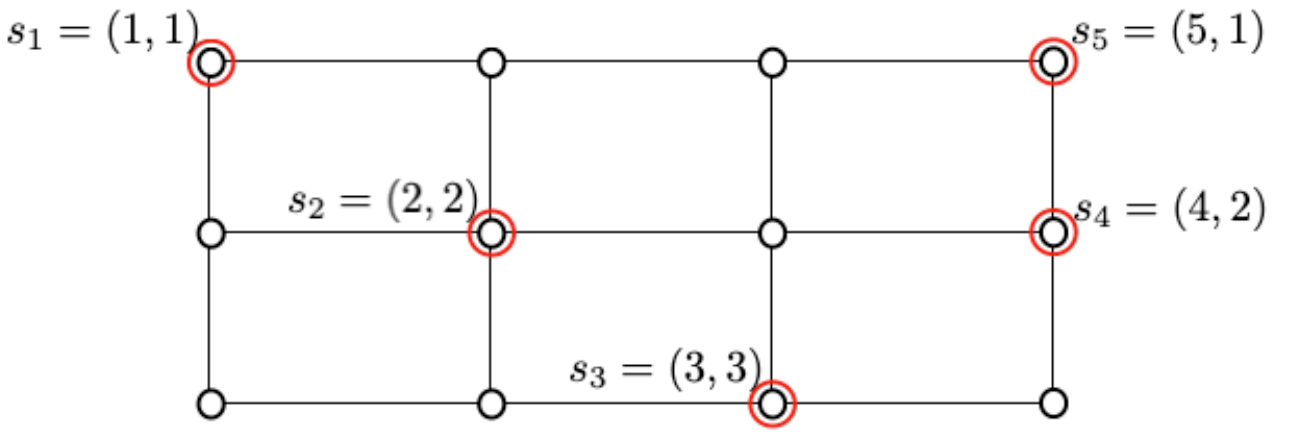} \]
\caption{The graph $P_4 \x P_3$ with $2$-burning sequence $s_1,s_2, s_3, s_4,s_5$  labeled.} 
\label{fig:p4p3}
\end{figure}

We next provide  upper bounds for the $2$-burning number and the  $2$-burning source number of the Cartesian product of graphs.  Figure~\ref{fig:grid} illustrates the proof in the case $G = P_5$ and $H= P_4$.

\begin{theorem}\label{thm:genupper-new} If $G$ and $H$ are graphs then  $t_2(G \x H) \leq t_2(G)t_2(H)$ and   $b_2(G \x H) \leq t_2(G)t_2(H) + (b_2(H) - t_2(H)) + (b_2(G) - t_2(G))$.
\end{theorem}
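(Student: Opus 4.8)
The plan is to build a $2$-burning sequence for $G \x H$ by taking an optimal $2$-burning sequence for $G$ and an optimal $2$-burning sequence for $H$ and using their ``product'' as the source set, laid out row by row. Let $s_G = (g_1,\dots,g_a)$ be an optimal $2$-burning sequence for $G$ with $a = t_2(G)$ and $\rd(s_G) = b_2(G)$, and similarly let $s_H = (h_1,\dots,h_c)$ be an optimal $2$-burning sequence for $H$ with $c = t_2(H)$ and $\rd(s_H) = b_2(H)$. Think of $G \x H$ as $n$ copies of $G$ (one ``column'' $G \times \{v\}$ for each $v \in V(H)$) and also as $m$ copies of $H$. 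I would designate as sources precisely the $ac$ vertices $(g_i, h_j)$ with $1 \le i \le a$, $1 \le j \le c$. To turn this into a sequence, I would order them so that the first $a$ sources are $(g_1,h_1),\dots,(g_a,h_1)$ (following $s_G$ in the copy of $G$ indexed by $h_1$), the next $a$ are $(g_1,h_2),\dots,(g_a,h_2)$, and so on through $(g_1,h_c),\dots,(g_a,h_c)$; this uses exactly $ac = t_2(G) t_2(H)$ rounds for source-placement, giving the bound $t_2(G\x H) \le t_2(G)t_2(H)$ once we confirm it is a $2$-burning sequence.

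Now I track how burning propagates. First I want to argue that the copy of $G$ indexed by $h_1$ — call it $G \times \{h_1\}$ — burns just as $G$ does under $s_G$, except shifted: its sources $(g_i,h_1)$ arrive in rounds $1,\dots,a$ exactly as in $s_G$, and within this column the only extra burnt neighbours a vertex could get come from adjacent columns, which only helps. So every vertex of $G \times \{h_1\}$ is blue by round $b_2(G)$. Next, consider the copy of $H$ sitting ``over'' a fixed already-burnt vertex $g_i$, i.e. $\{g_i\} \times V(H)$. Its sources $(g_i, h_j)$, $j=1,\dots,c$, are placed in rounds $i, a+i, 2a+i, \dots, (c-1)a + i$ — that is, the $j$-th source of $s_H$ arrives here in round $(j-1)a + i$ rather than round $j$, a delay, but the \emph{gaps} between consecutive source arrivals are all exactly $a$. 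The key observation is: once $g_i$'s column is burnt, a vertex $(g_i,v)$ needs only one more burnt neighbour, so $\{g_i\}\times V(H)$ burns like $H$ under $1$-burning seeded by the images of $s_H$'s vertices — and more carefully, I should show that delaying each source of a $2$-burning sequence by a uniform amount, while the whole copy also eventually gets a free burnt neighbour from the $g_i$-column, still finishes within the original round-count plus the accumulated delay. The cleanest version: I'll show by induction along $j$ that the vertices that $s_H$ burns by its round $t$ are all blue in $\{g_i\}\times V(H)$ by round (roughly) $(c-1)a + i + (t - c) + \text{slack}$, and push the bookkeeping so the final column $\{g_a\}\times V(H)$ is done by round $ac + (b_2(H) - c)$, then globally every other column $\{g_i\}\times V(H)$ for $i<a$ finishes no later, and finally the remaining un-sourced columns $G \times \{v\}$ for $v \notin \{h_1,\dots,h_c\}$ burn once enough of the $\{g_i\}$-over-$H$ copies have delivered two burnt neighbours to them — which adds at most the extra $b_2(G) - t_2(G)$ term.

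The honest accounting is the delicate part, and here is how I would organize it to avoid hand-waving. Define $r_j = $ the round in $s_H$ at which the $j$-th source is placed (so $r_j = j$) and note the placement round in column-over-$g_i$ is $(j-1)a + i$; the point is that the \emph{increment} matches or exceeds the increment in $s_H$ (which is $1$), so a monotonicity/``slow-motion replay'' lemma applies: if a burning schedule is replayed with each source delayed so that inter-source gaps only increase and a global uniform extra delay $d$ is added, plus each vertex eventually gains one permanently-burnt neighbour ``for free'' from an orthogonal direction, then the whole thing finishes within $d + \rd(\text{original})$ rounds of the last source. Granting that replay lemma (which is the substantive content and the main obstacle — one must be careful that in $2$-burning, unlike $1$-burning, a vertex with one burnt neighbour is \emph{not} progressing, so the ``free'' neighbour from the $g_i$-column genuinely reduces the requirement to $1$ and lets an $H$-side $1$-burning-style propagation take over), the bound $b_2(G\x H) \le ac + (b_2(H) - c) + (b_2(G) - a)$ drops out: spend $ac$ rounds placing sources; the last-placed column over $g_a$ then needs $b_2(H) - c$ more rounds to fill vertically; and the still-unfinished $G \times \{v\}$ columns need at most $b_2(G) - t_2(G)$ further rounds since by then each has a full ``seed row'' pattern isomorphic to $s_G$'s progress. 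I would present the replay lemma as a short standalone claim inside the proof, verify the two base columns ($G\times\{h_1\}$ and $\{g_i\}\times V(H)$) explicitly, and then assemble the three time-budgets additively. Figure~\ref{fig:grid} for $G=P_5,H=P_4$ should serve as the running sanity check that the round counts add up as claimed.
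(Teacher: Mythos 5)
Your construction is the paper's construction up to transposing the roles of $G$ and $H$: the source set is exactly $\{(g_i,h_j): 1\le i\le t_2(G),\ 1\le j\le t_2(H)\}$ laid out in blocks, and your three-term time budget is the same. The difference is in how propagation is justified, and that is where there is a genuine gap: everything hinges on your ``replay lemma,'' which you state only loosely and explicitly do not prove (``granting that replay lemma \dots the main obstacle''), and the mechanism you sketch for it does not work. The claim that ``once $g_i$'s column is burnt, a vertex $(g_i,v)$ needs only one more burnt neighbour, so $\{g_i\}\times V(H)$ burns like $H$ under $1$-burning'' is unsupported: for $v\notin\{h_1,\dots,h_c\}$ the neighbours of $(g_i,v)$ in the $G$-direction lie in other copies of $H$ that are themselves unburnt until late in the process, so the ``free'' burnt neighbour is precisely what needs to be proved, not something you may assume; and even if granted, a reduction to $1$-burning of $H$ would produce a $b_1(H)$-type term, not the $b_2(H)-t_2(H)$ term your accounting requires.

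What actually closes the gap is a short monotonicity induction (which the paper uses implicitly): extra blue vertices never slow the process, and if every source of a $2$-burning sequence $s$ becomes blue at most $D$ rounds later than its scheduled round in $s$, then every vertex becomes blue at most $D$ rounds later than in the run of $s$. In your copy $\{g_i\}\times V(H)$ the $j$-th source is delayed by $(j-1)a+i-j$, maximized at $j=c$, so the lemma with $D=(c-1)a+i-c$ gives completion by $(c-1)a+i+(b_2(H)-c)$; feeding these times into each column $G\times\{v\}$ (now a uniform shift of $s_G$'s schedule by $(c-1)a+b_2(H)-c$) yields exactly $t_2(G)t_2(H)+(b_2(H)-t_2(H))+(b_2(G)-t_2(G))$, with no appeal to $1$-burning. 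Note that the stretched-gap complication arises only because, with your ordering, you analyze the copies $\{g_i\}\times V(H)$ first; the copies $G\times\{h_j\}$ receive their sources in consecutive rounds, a pure time-shift of $s_G$, and analyzing those first is exactly how the paper arranges the argument (with $G$ and $H$ interchanged). So your skeleton and arithmetic match the paper's proof, but as written the central step is conceded rather than proven, and the justification you offer for it is incorrect.
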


\begin{proof} Let $m = |V(G)|$ and label the vertices of $G$ as $g_1,g_2,\dots,g_m$ such that $g=(g_1,g_2,\dots,g_k)$ is an optimal $2$-burning sequence for $G$ with  $\len(g)=k=t_2(G)$.  Let   $n = |V(H)|$ and label the vertices of $H$ as $h_1, h_2, \ldots, h_n$ such that $h=(h_1,h_2,\dots,h_\ell)$ is an optimal $2$-burning sequence for $H$ with $\len(h)=\ell=t_2(H)$.  This is illustrated in Figure~\ref{fig:grid} where $G = P_5$, $H = P_4$, $m  = 5$, $k = 3$, $n = 4$, and $\ell = 3$.

For $i: 1 \le i \le k$, let $a_i$ be the sequence $(g_i,h_1),(g_i,h_2),\dots,(g_i,h_\ell)$ and let $s$ be the sequence $a_1,a_2, \ldots, a_k$.
Thus $s$ is a sequence of vertices in $G  \x  H$ and $\len(s) = k\ell = t_2(G)t_2(H)$.   In Figure~\ref{fig:grid}, the nine vertices of $s$ are circled and labeled $s_1,\dots,s_9$.  We will show that $s$ is a $2$-burning sequence for $G  \x  H$ and calculate the number of rounds until all vertices are blue.

For each $i:  1 \le i \le k$,  let $ H_i = \{(g_i,h_1),(g_i,h_2),\dots,(g_i,h_\ell), \ldots , (g_i,h_n)\}$ and note that   $H_i$  induces a copy of $H$ in  $G  \x  H$.  The first $\ell$  vertices listed in $H_i$  are blue after round $t_2(G)t_2(H)$ because they are part of $s$.  Since $(h_1,h_2, \ldots, h_{\ell})$ is a $2$-burning sequence for $H$, after an additional $b_2(H) - t_2(H)$ rounds, the remaining vertices in $H_i$ are blue.   Thus, if $1 \le i \le k$ and $1 \le j \le n$ then the vertex $(g_i,h_j)$ is blue after round $t_2(G)t_2(H) + b_2(H) - t_2(H)$.  

It remains to consider vertices of the form $(g_i,h_j)$ where $k+1 \le i \le m$ and $1 \le j \le n$.  For each $j:   1 \le j \le n$, let $G_j = \{
  (g_1,h_j),(g_2,h_j),\dots,(g_k,h_j), \ldots , (g_m,h_j)\}$ and note that  $G_j$ induces a copy of $G$ in  $G  \x  H$. 
    Since  $(g_1,g_2, \ldots, g_k)$ is a $2$-burning sequence for $G$   of length $t_2(G)$, all vertices of $G$ are blue  $b_2(G) - t_2(G)$ rounds after $g_k$ becomes blue.  Similarly, since $  (g_1,h_j),(g_2,h_j),\dots,(g_k,h_j)$ is a $2$-burning sequence of length $t_2(G)$ for  the copy of $G$   induced by $G_j$, all vertices of $G_j$ are blue  $b_2(G) - t_2(G)$ rounds after $(g_k,h_j)$ becomes blue.  We concluded above that  $(g_k,h_j)$ is blue  after round $t_2(G)t_2(H) + b_2(H) - t_2(H)$ for $1 \le j \le n$, so all vertices in $G_j$ are blue after round $t_2(G)t_2(H) + (b_2(H) - t_2(H)) + (b_2(G) - t_2(G))$.
     Thus sequence $s$ is a $2$-burning sequence for $G  \x  H$ of length $t_2(G)t_2(H)$ and $b_2(G  \x  H) \le  t_2(G)t_2(H) + (b_2(H) - t_2(H)) + (b_2(G) - t_2(G))$.  
\end{proof}

\begin{figure}[htbp]
\[ \includegraphics[width=0.9\textwidth]{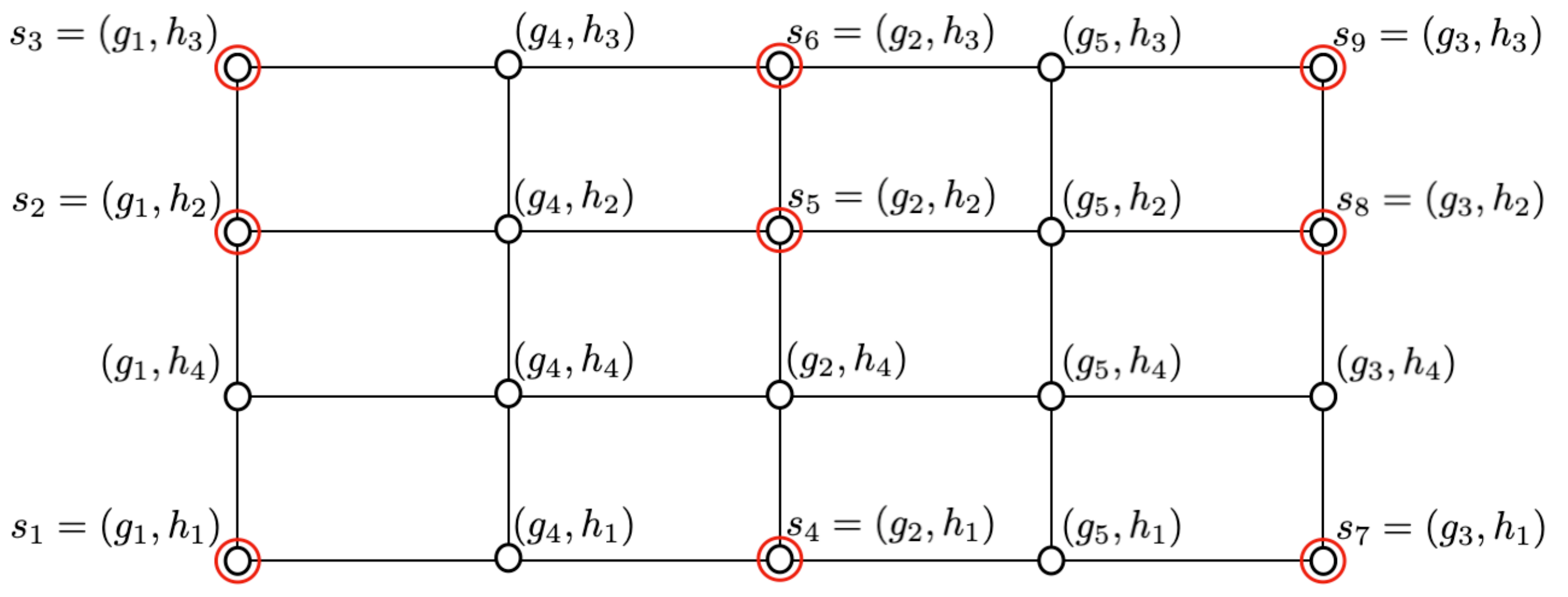} \]
\caption{The graph $P_5 \x P_4$ with vertices labeled as in the proof of Theorem~\ref{thm:genupper-new} and with source vertices $s_1, s_2, \ldots, s_9$.}
\label{fig:grid}
\end{figure}

The next result illustrates that for some graphs $G$ and $H$, the upper bound for $t_2(G \x H)$ given in Theorem~\ref{thm:genupper-new} is exact.

\begin{theorem} The graph $C_4 \x C_4$ has
 $b_2(C_4 \x C_4) = 5$ and $t_2(C_4 \x C_4)=4$.
 \label{Thm:C4C4}
  \end{theorem}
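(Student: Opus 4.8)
The bound $b_2(C_4 \x C_4) \ge 5$ is immediate from Corollary~\ref{cor:low} with $G = H = C_4$, and $t_2(C_4 \x C_4) \le t_2(C_4)^2 = 4$ from Theorem~\ref{thm:genupper-new}. So the plan has two remaining parts: (i) prove $b_2(C_4 \x C_4) \le 5$, and (ii) prove $t_2(C_4 \x C_4) \ge 4$. Throughout I would index the vertices of $C_4 \x C_4$ by $\mathbb{Z}_4 \times \mathbb{Z}_4$.

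For (i) I would exhibit a single $2$-burning sequence of length $4$ that finishes by round $5$; a natural choice is $s = \big((0,0),(0,2),(2,0),(2,2)\big)$, which is exactly the sequence produced by the construction in the proof of Theorem~\ref{thm:genupper-new} for $G=H=C_4$, so the content is just that here it beats the general bound of $6$ given there. Tracing Algorithm $2$-burning: by the end of round $3$ the column $\{(0,j)\}$ is blue, by the end of round $4$ the row $\{(i,0)\}$ and the vertex $(2,2)$ have joined it, and then each of the remaining eight vertices has a blue neighbor in that row together with a blue neighbor in that column or in $\{(2,2)\}$, so all vertices are blue by round $5$. Hence $\rd(s) = 5$; with the lower bound this gives $b_2(C_4 \x C_4) = 5$, and since $\len(s) = 4$ this also re-confirms $t_2(C_4 \x C_4) \le 4$.

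For (ii) I would show that no $2$-burning sequence $s$ with $\len(s) \le 3$ colors everything blue by round $5$, hence none is optimal. Write $B_t$ for the blue set after round $t$ and $\partial(S)$ for the number of edges with exactly one endpoint in $S$. Two standard facts drive the argument: since $C_4 \x C_4$ is $4$-regular and no source is added after round $3$, $\partial(B_t)$ is non-increasing for $t \ge 3$ (coloring a vertex with $k \ge 2$ blue neighbors changes $\partial$ by $4 - 2k \le 0$); and in any one round the number of newly colored vertices is at most $\lfloor \partial(B_t)/2\rfloor$, since each absorbs at least two boundary edges. Hence $|B_5| \le |B_3| + 2\lfloor \partial(B_3)/2\rfloor \le |B_3| + \partial(B_3)$, so it suffices to keep this below $16$. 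Now I analyze $B_3$: nothing auto-colors in round $2$, so $B_2 \subseteq \{s_1,s_2\}$; in round $3$ the auto-colored vertices are those with two neighbors in $B_2$, which is nonempty only when $s_1,s_2$ lie at distance $2$, in which case they have exactly two common neighbors $c_1,c_2$ and $Q := \{s_1,s_2,c_1,c_2\}$ is an induced $4$-cycle. So either $|B_3| \le 3$, giving $\partial(B_3) \le 12$ and $|B_5| \le 15$; or $|B_3| = 4$, forcing $B_3 = Q$, $\partial(B_3) = 8$, and $|B_5| \le 12$; or $|B_3| = 5$, i.e. $B_3 = Q \cup \{s_3\}$ with $s_3 \notin Q$. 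In the last case I would use that no vertex outside an induced $4$-cycle of $C_4 \x C_4$ has two neighbors on it — immediate from triangle-freeness together with the fact that every distance-$2$ pair has exactly two common neighbors, both lying on the $4$-cycle — so any vertex colored in round $4$ must be a neighbor of $s_3$, whence $|B_4| \le 5 + 4 = 9$ and then $|B_5| \le 9 + 6 = 15$ since $\partial(B_4) \le \partial(B_3) \le 20 - 2\cdot 4 = 12$. In every case $B_5 \ne V(C_4 \x C_4)$, so $\rd(s) \ge 6 > b_2(C_4 \x C_4)$; therefore $t_2(C_4 \x C_4) \ge 4$, completing the proof.

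I expect (ii) to be where the work lies, and in particular the subcase $|B_3| = 5$: the raw perimeter bound alone only yields $|B_5| \le 5 + 12 = 17$, so one genuinely needs the observation that any vertex colored in round $4$ must hang off the third source $s_3$, which in turn rests on the neighbor-structure of induced $4$-cycles in $C_4 \x C_4$. Everything else — the other cases of (ii) and all of (i) — should be routine checking.
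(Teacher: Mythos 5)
Your proposal is correct, but the part that carries the real weight---the lower bound $t_2(C_4 \x C_4)\ge 4$---is argued by a genuinely different route than the paper's. The paper, like you, gets $b_2\ge 5$ from Corollary~\ref{cor:low} and $t_2\le 4$ from Theorem~\ref{thm:genupper-new}, and certifies $b_2\le 5$ with an explicit length-$4$ sequence (the diagonal $(1,1),(2,2),(3,3),(4,4)$ rather than your $(0,0),(0,2),(2,0),(2,2)$; both trace out correctly). For $t_2\ge 4$, however, the paper assumes an optimal sequence of length $3$ and does a positional case analysis: either every row and every column contains at most one source, or some row or column contains two, and in each case it follows one specific far-away vertex (essentially a corner such as $(4,4)$) and shows it cannot be blue by round $5$. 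You instead run a perimeter argument in the spirit of $2$-neighbor bootstrap percolation: $|B_3|\le 5$, the edge boundary $\partial(B_t)$ is non-increasing once sources stop, each vertex colored in rounds $4$ and $5$ absorbs two boundary edges, and in the only troublesome case $B_3=Q\cup\{s_3\}$ you invoke the (correct) structural fact that no vertex outside an induced $4$-cycle of $C_4\x C_4$ has two neighbors on it, so round $4$ adds at most the four neighbors of $s_3$ and $|B_5|\le 15<16$ in every case. I checked the delicate points: boundary monotonicity survives simultaneous coloring (edges inside the newly colored set only help), every distance-$2$ pair has exactly two common neighbors so $|B_3|=4$ really forces $B_3=Q$ and $|B_3|=5$ forces $B_3=Q\cup\{s_3\}$, and your accounting $9+6\le 15$ closes the one case the raw bound misses. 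The trade-off: the paper's argument is shorter and more hands-on for this single small graph, while yours is more systematic, avoids the WLOG/symmetry bookkeeping, and its counting machinery would transfer to estimating what a bounded number of sources can accomplish in a bounded number of rounds on other $4$-regular products such as larger toroidal grids.
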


\begin{proof} Let $V(C_4) = \{1,2,3,4\}$ where vertex $i$ is adjacent to vertex $i+1$ (mod 4).  
Corollary~\ref{cor:low} provides the lower bound $b_2(C_4 \x C_4) \geq 5$; and it is straightforward to verify that $b_2(C_4 \x C_4) \le 5$ by choosing source vertices $(1,1),(2,2),(3,3),(4,4)$.  Thus $b_2(C_4 \x C_4) = 5$ and every optimal $2$-burning sequence $s$ has $\rd(s) = 5$.

Theorem~\ref{thm:genupper-new} provides the upper bound $t_2(C_4 \x C_4) \leq 4$.  For a contradiction, suppose $t_2(C_4 \x C_4) \le 3$ and fix an optimal $2$-burning sequence $s$ for $C_4 \x C_4$ with   $\len(s) \le 3$. Since $s$ is optimal, we know $\rd(s) = 5$ and since there is no $2$-burning sequence for $C_4 \x C_4$ with only $2$ source vertices, we also know $\len(s) = 3$.     We will use the terms ``rows'' and ``columns'' as illustrated in Figure~\ref{fig:C4C4} where ``row $i$" refers to the set $\{(i,1),(i,2),(i,3),(i,4)\}$;  and ``column $i$" refers to $\{(1,i),(2,i),(3,i),(4,i)\}$.\smallskip

 First consider the case in which there is at most one source vertex in each row and in each column.  Without loss of generality we may assume that  row $4$ and column $4$ contain no source vertices.  The vertices $(1,1)$ and $(1,3)$ cannot both be blue after round $3$ since at most one is a source and it would require a second source from row $1$ to turn the other blue by round $3$.  Thus the earliest $(1,1)$ and $(1,3)$ are both blue is round $4$, and therefore $(1,4)$ cannot turn blue until round $5$.  Similarly, vertices $(3,4)$, $(4,1)$ and $(4,3)$ cannot turn blue until round $5$. 
 So none of the neighbors of  vertex $(4,4)$  are blue until round $5$, and consequently, vertex $(4,4)$ is not blue at the end of round $5$, a contradiction. \smallskip
  
Otherwise, there exists a row or column containing two source vertices.  Without loss of generality we may assume that column $2$ has two source vertices.  There must be a source vertex in column $4$, or else no vertex in column $4$ will ever turn blue.  Without loss of generality we may assume that $(2,4)$ is a source vertex.  Any non-source vertex that turns blue in round $3$ must have two source neighbors.   Thus, at the end of round $3$, the  only  vertices that are blue, are in row $2$ and in column $2$.  Even if all seven of these vertices were blue at round $3$ (see Figure~\ref{fig:C4C4}), vertex $(4,4)$ would not turn blue by the end of round $5$, a contradiction.  Thus $t_2(C_4 \x C_4)=4$.
    \end{proof}

\begin{figure}[htbp] 
\[ \includegraphics[width=0.35\textwidth]{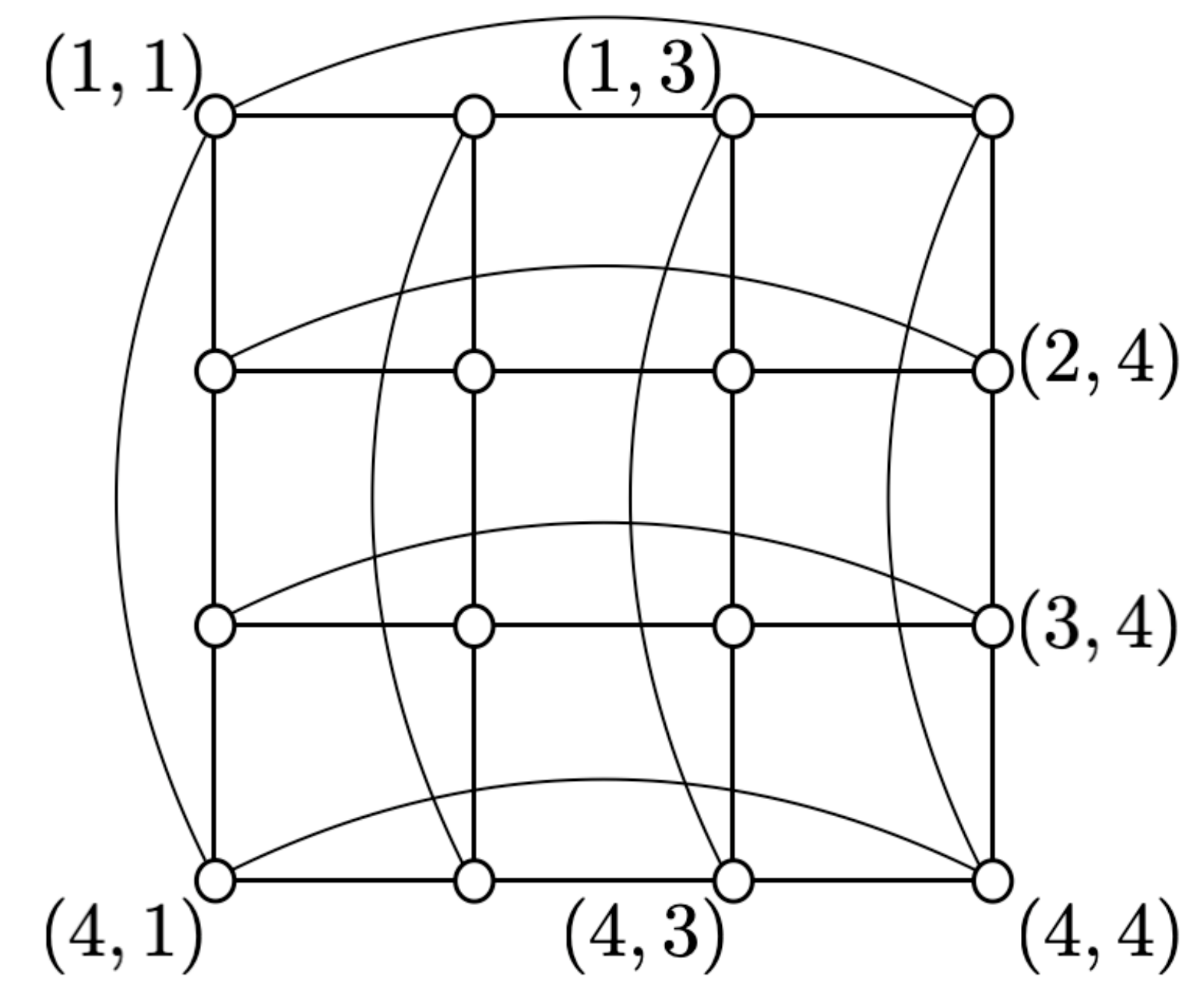} \]
\caption{The graph $C_4 \x C_4$ with some vertices labeled for reference, as in the proof of Theorem~\ref{Thm:C4C4}.}

\label{fig:C4C4}
\end{figure}

We know that $t_2(C_4) = 2$ and in Theorem~\ref{Thm:C4C4} we show
  $t_2(C_4 \x C_4) = 4$, so  $t_2(C_4 \x C_4) =  t_2(C_4)t_2(C_4)$.  Thus    there exist graphs $G$ and $H$ for which the bound $t_2(G \x H) \leq t_2(G)t_2(H)$ in Theorem~\ref{thm:genupper-new}
   is exact.  For the $2$-burning number, Theorem~\ref{thm:genupper-new} provides the bound $b_2(C_4 \x C_4) \leq 6$, but Theorem~\ref{Thm:C4C4}  shows that $b_2(C_4 \x C_4)=5$.    We do not know if there exist graphs $G$ and $H$ for which the bound on $b_2(G \x H)$ from Theorem~\ref{thm:genupper-new} is exact.  

\section{Conclusion and Directions}\label{sec:questions}

Since the $2$-burning numbers are known for $P_n$ and $C_n$, it is natural to consider the $2$-burning number of their products.  Theorems~\ref{thm:lower} and~\ref{thm:genupper-new} provide lower and upper bounds for $2$-burning numbers, but the exact values remain unknown.  For $P_n \x P_n$, we can gain insight and bounds on the $2$-burning number from bootstrap percolation since $P_n \x P_n$ is one of the few graphs for which bootstrap percolation has been studied from an extremal perspective.

In Section~\ref{intro-sec} we defined the related problem of $r$-neighbor bootstrap percolation where rather than having a sequence of sources turning blue in successive rounds, there is a set of sources that all turn blue in round $0$.  
 Most work on bootstrap percolation has focused on the process when the  source vertices  are chosen independently at random with probability $p$, but some researchers have considered the  problem of determining the minimum cardinality of a percolating set.  Following \cite{MN}, we denote by $m(G,r)$ the cardinality of a minimum percolating set on graph $G$ following the $r$-neighbor bootstrap process.  We denote by $\tau(G,r)$, the minimum number of rounds by which every vertex of $G$ is blue, over all possible minimum percolating sets on $G$.    Though we state the next observation for $r=2$, we note that it holds for general $r$ where the parameter $t_r(G)$ is defined analogously to $t_2(G)$. 

\begin{observation}\label{obs:easybound} For a graph $G$ on $n$ vertices $$m(G,2) \leq t_2(G) \leq b_2(G) \leq m(G,2)+\tau(G,2).$$ 
\end{observation}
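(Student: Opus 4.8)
The plan is to prove the three inequalities separately; the middle one, $t_2(G)\le b_2(G)$, was already observed in Section~2, so nothing new is needed there.

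For the leftmost inequality $m(G,2)\le t_2(G)$, I would take an optimal $2$-burning sequence $s=(s_1,\dots,s_m)$ with $\len(s)=m=t_2(G)$ and set $A=\{s_1,\dots,s_m\}$. The claim is that $A$ is a percolating set for $2$-neighbor bootstrap percolation. The key is a coupling/monotonicity statement: running bootstrap percolation from $A$ (all of $A$ blue at round $0$) and Algorithm $2$-burning on $s$ in parallel, the set of vertices blue under the burning process at round $j$ is contained in the set blue under bootstrap percolation at round $j$, for every $j\ge 0$. This follows by induction on $j$: a vertex becomes blue under burning at round $j$ either because it was already blue, or because it is the source $s_j$ (hence in $A$, hence blue under bootstrap percolation from round $0$ on), or because it has at least two neighbors blue under burning at round $j-1$ (hence, by induction, two neighbors blue under bootstrap percolation at round $j-1$); in all cases it is blue under bootstrap percolation at round $j$, using that the bootstrap-blue set is nondecreasing in $j$. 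Since $s$ is a burning sequence, eventually every vertex is blue under the burning process, so eventually every vertex is blue under bootstrap percolation from $A$; thus $A$ percolates and $m(G,2)\le|A|=t_2(G)$.

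For the rightmost inequality $b_2(G)\le m(G,2)+\tau(G,2)$, I would choose a minimum percolating set $A$ with $|A|=m(G,2)$ that witnesses $\tau(G,2)$, so $2$-neighbor bootstrap percolation from $A$ colors every vertex of $G$ blue within $\tau(G,2)$ rounds. Form a $2$-burning sequence $s$ by listing the vertices of $A$ (in any order) as the first $m(G,2)$ sources. When Algorithm $2$-burning is run on $s$, each vertex of $A$ is blue by round $m(G,2)$, since at the round in which it is listed it is either colored by rule (i) or already colored. Let $B$ be the set of vertices blue at the end of round $m(G,2)$, so $A\subseteq B$. From round $m(G,2)+1$ onward no new sources are introduced, so the burning process on $G$ coincides round-for-round with $2$-neighbor bootstrap percolation started from $B$. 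By monotonicity of bootstrap percolation in the initial set (if $A\subseteq B$ then the blue set at every round starting from $A$ is contained in that starting from $B$, again a one-line induction), percolation from $B$ finishes within $\tau(G,2)$ further rounds. Hence $\rd(s)\le m(G,2)+\tau(G,2)$, and therefore $b_2(G)\le m(G,2)+\tau(G,2)$.

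The arguments are short and routine; the only point that needs to be stated carefully is the coupling/monotonicity lemma linking the two dynamics — namely that introducing all sources at once, or starting from a larger initial blue set, never slows the spread. This is a standard induction on the round number, but it deserves a clean statement because the burning process interleaves source addition (rule (i)) with propagation (rule (ii)) within a single round, whereas bootstrap percolation performs only propagation.
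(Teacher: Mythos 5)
Your proof is correct and is exactly the justification the paper intends: the paper states this as an observation without proof, and your two couplings (the source set of an optimal minimum-length burning sequence percolates, and seeding the vertices of an optimal minimum percolating set as the first $m(G,2)$ sources followed by monotonicity in the initial blue set) are the natural inductions on the round number that make it precise. No gaps; the care you take with rule (i) versus rule (ii) and with already-colored sources is exactly the right level of detail.
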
  

Observe that if $G = K_n$ and $n \geq 3$ then $m(K_n,2)=2 = t_2(K_n)$ and $b_2(K_n)=3$, and  $\tau(K_n,2)=1$; therefore the first and third inequalities are tight.  We have seen other examples (e.g., paths and cycles on an even number of vertices) that make the middle inequality tight. Since $\tau(G,2) > 0$ for all graphs $G$, there are no graphs for which equality holds throughout.

In Section~\ref{cartesian-sec} we considered Cartesian products and these have also been studied in the context of percolation.  
For $2$-neighbor bootstrap percolation, the extremal problem of determining the smallest percolating set was first considered Pete~\cite{Pete1997}, published in Hungarian; and later communicated by Balogh and Pete~\cite{BaloghPete}.
The vertices on the diagonal of the square grid  $P_n \x P_n$ constitute a percolating set for this graph, thus   $m(P_n \x P_n,2) \le n$.  Indeed it is shown in  \cite{Pete1997} and \cite{BaloghPete}  that $m(P_n \x P_n,2) = n$.   The same set of sources form a $2$-burning sequence for $P_n \x P_n$ and using this sequence, all vertices turn blue by round $2n$.
  Thus, $n \leq b_2(P_n \x P_n) \leq 2n$.  It would be interesting to determine the exact value of $b_2(P_n \x P_n)$ for general $n$.
  
\begin{question} Can we determine $b_2(P_n \x P_n)$ for all $n$? \end{question}

In this paper we have focused on  $b_2(G)$, the $2$-burning number, and our new parameter $t_2(G)$, the $2$-burning source number.
The quantities $b_2(G)$ and $t_2(G)$ are equal precisely when every optimal $2$-burning sequence requires a source in each round.  
We  found infinite families of graphs $G$ for which $t_2(G) = b_2(G)$; including subsets of paths, cycles, and spiders.  More generally, we would like to determine the set of graphs for which 
 these parameters are equal.

\begin{question} Can we characterize all graphs $G$ for which $t_2(G)=b_2(G)$? \end{question}

Finally, we state two questions relating to the discussions succeeding Theorems~\ref{thm:lower} and~\ref{Thm:C4C4}.

\begin{question} For arbitrary graphs $G$ and $H$, are $t_2(G)$ and $t_2(H)$ lower bounds for $t_2(G \x H)$?
\end{question}

\begin{question}\label{q1} Do there exist graphs $G$ and $H$ for which the bound on $b_2(G \x H)$ from Theorem~\ref{thm:genupper-new} is exact?\end{question} 

 \medskip
 \noindent
 {\bf Conflicts of interest:}. The authors have no conflicts of interest.

\end{document}